\newlength{\myfboxsep}
\newlength{\mywidth}
\newtheorem{theorem}{Theorem}[section]
\newtheorem{lemma}{Lemma}[section]
\newtheorem{corollary}{Corollary}[section]
\newtheorem{remark}{Remark}[section]
\theoremstyle{definition}
\newtheorem{definition}{Definition}[section]
\newtheorem{example}{Example}[section]
\theoremstyle{remark}
\numberwithin{equation}{section}
\DeclareMathOperator{\lito}{o}
\DeclareMathOperator{\bigo}{O}
\newcommand{\gmu}{G_{\mu}}
\newcommand{\nustar}{{\mu_*^{\eta',0,\nu}}}
\newcommand{\nuplus}{{\mu_{\boxplus,V}^{\gamma,\sigma}}}
\newcommand{\nuplusn}{{\mu_{\boxplus,V}^{\gamma,\sigma_n}}}
\newcommand{\nuplusgn}{{\mu_{\boxplus,V}^{\gamma/n,\sigma/n}}}
\newcommand{\muminus}{{\mu_{\boxplus,C}^{\eta',0,\nu}}}
\title{Regular variation and free regular infinitely divisible laws}
\author{Arijit Chakrabarty, Sukrit Chakraborty and Rajat Subhra Hazra}
\address{ Indian Statistical Institute\\ 203, B.T. Road, Kolkata- 700108, India }
\email{arijit.isi@gmail.com, sukrit049@gmail.com, rajatmaths@gmail.com}
\subjclass[2010]{60G70; 46L53; 46L54}
\keywords{Regular variation, free convolution, subexponential, free regular measure, product of random matrices}
\begin{document}

\begin{abstract}
In this article  the relation between the tail behaviours of a free regular infinitely divisible probability measure and its L\'evy measure is studied. An important example of such a measure is the compound free Poisson distribution, which often occurs as a limiting spectral distribution of certain sequences of random matrices. We also describe a connection between an analogous classical result of \cite{embrechts1979subexponentiality} and our result using the Bercovici-Pata bijection.
\end{abstract}
\maketitle
\section{Introduction}

The limiting spectral distribution (LSD) of product of two or more random matrices is important in the field of random matrix theory. It arises naturally, for example, in study of  multivariate F-matrix (the product of mutually independent sample covariance matrix and the inverse of another sample covariance matrix). The limiting spectral distributions of F-matrices were studied in \cite{MR585695}, \cite{MR914942}.  In addition, products of random matrices arise in study of high dimensional time-series, for example, see \cite{MR2609495}, \cite{pan2012asymptotic}. For a history of the product of random matrices the reader is referred to \cite{bairandprod}.

The existence of a non-random LSD of the product of a sample covariance matrix and a non-negative definite Hermitian matrix, which are mutually independent, was given  explicitly in terms of the Stieltjes transform in \cite{MR1370408}. A stronger result in this direction is obtained using the moment method and truncation arguments in \cite{bairandprod}, by replacing the non-negative definite assumption by a Lindeberg type one, on the entries of the Hermitian matrices. When one considers Wishart matrices, a more explicit description of the LSD  can be given in terms of free probability; see \cite{merlevede2016peligrad}, \cite{chakrabarty2018note}.

It is well known in random matrix theory that the Marchenko-Pastur law (also called the free Poisson distribution) turns out to be the limiting spectral distribution of a sequence of Wishart random matrices ($W_N$). Suppose for each $N \geqslant 1$, $Y_N$ is an $N \times N$ independent random Hermitian matrix with LSD $\rho$. It can be shown that the expected empirical distribution of $W_NY_N$ converges to $m\boxtimes \rho$ as $N \to \infty$ where $\boxtimes$ denotes the free multiplicative convolution. It is not difficult to see that $\rho$ is compactly supported if and only if so is $m \boxtimes \rho$. Therefore it is  natural to ask whether there is any relation between the tail behaviour of $m \boxtimes \rho$ and $\rho$? In this paper, an affirmative answer is given to that question when $\rho$ has a power law tail decay. Thus, based on the LSD of  $Y_N$, one can describe the tail behaviour of that of  $W_NY_N$. In general, it is very hard to write down an explicit formula for the limit distribution. 

It is noteworthy that the probability measures of the form $m \boxtimes \rho$ are free regular probability measures (see \cite{ariz12}) which form a special subclass of free infinitely divisible distributions (also called the $\boxplus$-infinitely divisible distributions, see \cite{bercovici1993free}). The free cumulant transform of a free regular probability measure can be described through a  L\'evy-Khintchine representation. Interestingly, it turns out that $\rho$ is the L\'evy measure of $m \boxtimes \rho$.  Therefore it is natural to wonder whether there is any relation between the tail behaviours of a free regular probability measure and its L\'evy measure. 

In classical probability theory, a classically infinitely divisible probability measure $\mu$ also enjoys a L\'evy-Khintchine representation in terms of its L\'evy measure $\nu$.  In \cite{embrechts1979subexponentiality}, it was shown that for a positively supported classically infinitely divisible probability measure (a subordinator) $\mu$, the tails of $\mu$ and its L\'evy measure $\nu$ are asymptotically equivalent if and only if any one of  $\mu$ or $\nu$ is subexponential. In analogy to the classical case, it is natural to pose whether free subexponentiality characterizes the tail equivalence of a free infinitely divisible probability measure and its free L\'evy measure. But unfortunately the result can not be extended to the bigger class of free infinitely divisible probability measures. Since according to \cite{ariz12}, the correct analogue of the positively supported classically infinitely divisible probability measures are the free regular probability measures, in this paper, we provide a partial answer in Theorem~\ref{main theorem-2} by showing the tail equivalence of a free regular probability measure and its free L\'evy measure in presence of regular variation. Note that regularly varying measures are the most important subclass of both free and classical subexponential distributions (\cite{hazra1}). As an application of this result, the exact tail behaviour of the free multiplicative convolution of Marchenko-Pastur law with another regularly varying measure is derived in Corollary~\ref{theoremhere}. Besides, the connection of these results with the classical case is not a mere coincidence. From the famous result of Bercovici and Pata (\cite{ber:pata:biane}), it is known that classical and free infinitely divisible laws are in a one-to-one correspondence. It is shown in Corollary~\ref{classifreeconnec} that in the regularly varying set-up, the classical infinitely divisible law and its image under the Bercovici-Pata bijection are tail equivalent. The free multiplicative convolution of a measure with Wigner's semicircle law also appears naturally as limits of many random matrix models. It is shown in Corollary~\ref{corwigprod} that the tail behaviour turns out to be different from the one involving the Marchenko-Pastur law.  

In Section~\ref{sec:prelim} the basic notations and transforms used in free probability are introduced. Subsequently, the main results and their proofs are in Section \ref{subsec:mainresults}. Section~\ref{sec:cors} collects some corollaries arising out of the main results. The proofs depend heavily on relations between the transforms and regular variation. To keep the article self contained, the main result of \cite{hazra1} is quoted in the Appendix.

\section{Preliminaries and Main results}\label{sec:prelim}
\subsection{Notations and basic definitions: }
A real valued measurable function $f$ defined on non-negative real line is called \textit{regularly varying} (at infinity) with index $\alpha$ if for every $t>0$, $f\left(tx\right)/f\left(x\right)\rightarrow t^{\alpha}$ as $x \rightarrow \infty$. The function $f$ is said to be a slowly varying function (at infinity) if $\alpha = 0$. Throughout this paper, regular variation of a function will always be considered at infinity. A distribution function $F$  on $[0,\infty)$ has regularly varying tail of index $-\alpha$ if $\overline{F}(x) = 1 - F(x)$ is regularly varying of index $-\alpha$. Since $\overline{F}(x) \rightarrow 0$ as $x \rightarrow \infty$, it necessarily holds that $\alpha \geqslant 0$. For further details about regular variation see \cite{extremeresnick}. 
 A distribution $F$ on $[0,\infty)$ is called (classical) subexponential if $\overline{F^{(n)}}(x)\sim \overline F(x)$ as $x\to\infty$, for all $n\ge 0$. Here $F^{(n)}$ denotes the $n$-th (classical) convolution of $F$. Both regular variation and subexponentiality of a probability measure $\mu$ is defined thorough its distribution function $F$.

The real line and the complex plane will be denoted by $\mathbb{R} $ and $ \mathbb{C}$, respectively. The notations $\mathbb C^+$ and $\mathbb C^-$ are used for the upper and the lower halves of the complex plane, respectively, namely, $\mathbb C^+ = \{z\in\mathbb C: \Im z>0\}$ and $\mathbb C^- = - \mathbb C^+$, while $\mathbb{R}^+ := [0,\infty)$. For a complex number $z$, $\Re z$ and $\Im z$  denote its real and imaginary parts, respectively. Given positive numbers $\eta$, $\delta$ and $M$, let us define the following cone:
$$\Gamma_{\eta}=\{z\in \mathbb{C}^+: |\Re z|<\eta \Im z\} \text{ and } \Gamma_{\eta,M}=\{z\in\Gamma_{\eta}: |z|>M\}.$$
 Then we shall say that $f\left(z\right) \to l$ as $z$ goes to $\infty$ non-tangentially, abbreviated by ``n.t.'', if for any $\epsilon>0$ and $\eta>0$, there exists $M\equiv M\left(\eta,\epsilon\right)>0$, such that $|f\left(z\right) - l|< \epsilon$, whenever $z \in \Gamma_{\eta, M}$. This is same as saying that the convergence in $\mathbb C^+$ is uniform in each come $\Gamma_\eta$. The boundedness can be defined analogously.

We use the notations ``$f\left(z\right)\approx g\left(z\right)$'', ``$f\left(z\right)=\lito\left(g\left(z\right)\right)$'' and ``$f\left(z\right)=\bigo\left(g\left(z\right)\right)$ as $z\to\infty$ n.t.'' to mean, respectively, that ``$f\left(z\right)/g\left(z\right)$ converges to a non-zero finite limit'', ``${f\left(z\right)}/{g\left(z\right)}\rightarrow 0$'' and ``$f\left(z\right)/g\left(z\right)$ stays bounded as $z\to\infty$ n.t.'' If the limit is $1$ in the first case, we write $f\left(z\right) \sim g\left(z\right)$ as $z\to\infty$ n.t. For $f\left(z\right)=\lito\left(g\left(z\right)\right)$ as $z\to\infty$ n.t., we shall also use the notations $f\left(z\right)\ll g\left(z\right)$ and $g\left(z\right)\gg f\left(z\right)$ as $z\to\infty$ n.t.

Following \cite{bercovici1993free}, we recall that a non-commutative probability space ($\mathbf{A}, \phi$) is
said to be a $\mathcal{W^{\star}}$ -probability space if $\mathbf{A}$ is a non-commutative von Neumann algebra
and $\phi$ is a normal faithful trace. A family of unital von Neumann subalgebras
${\left(\mathbf{A}_i \right)}_{{i \in I}} \subset \mathbf{A}$ in a $\mathcal{W^{\star}} $ -probability space is called free if $\phi\left(a_{1} a_{2} \cdots a_{n} \right) = 0$
whenever $\phi\left(a_{j} \right) = 0, a_{j} \in A_{i_{j}}$ , and   $i_{1} \neq i_{2} \neq \cdots \neq i_{n}$ . A self-adjoint operator $X$ is affiliated with $\mathbf{A}$ if $f\left(X\right) \in \mathbf{A}$ for any bounded Borel function $f$ on $\mathbb{R}$.
In this case it is said that $X$ is a (non-commutative) random variable. For
a self-adjoint operator $X$ affiliated with $\mathbf{A}$, the distribution of $X$ is the unique
measure $\mu_{X}$ in $\mathcal{M}$ satisfying
\begin{align*}
\phi\left(f\left(X\right)\right) = \int_{\mathbb{R}} f\left(x\right) \mu_{X}\left(dx\right)
\end{align*}
for every Borel bounded function $f$ on $\mathbb{R}$. The self-adjoint operators $\{X_i\}_{1 \leqslant i \leqslant p}$, affiliated with a von Neumann algebra $\mathbf{A}$, are called free if and only if the algebras $\{f(X_i) : f \text{ is bounded measurable}\}_{1 \leqslant i \leqslant p}$ are free.

$\mathcal{M} $ and $\mathcal{M}_{+} $ are the set of probability measures supported on $\mathbb{R}$ and $\mathbb{R}^+$  respectively. By $\mathcal{M}_{p}$ we mean the set of probability measures on $[0, \infty)$ whose $p$-th moment is finite and do not have the $(p+1)$-th moment. The set $\mathcal{M}_{p,\alpha}$ will contain all probability measures in $\mathcal{M}_p$ with regularly varying tail index $-\alpha$ such that $p \leqslant \alpha \leqslant p+1$.

For a probability measure $\mu\in \mathcal{M}$, its Cauchy transform is defined as
$$G_{\mu}\left(z\right)=\int_{-\infty}^{\infty}\frac{1}{z-t}d\mu\left(t\right),\ \ \  z\in \mathbb{C}^+.$$
Note that $G_{\mu}$ maps $\mathbb{C}^+$ to $\mathbb{C}^{-}$. Set $F_{\mu}=1/\gmu$, which maps $\mathbb{C}^+$ to $\mathbb{C}^+$. 

The free cumulant transform ($C_{\mu}$) and the Voiculescu transform ($\phi_{\mu}$) of a probability measure $\mu$ are defined as
\begin{align}
C_{\mu}\left(z\right) = z \phi_{\mu}\left(\frac{1}{z}\right) = z F_{\mu}^{-1}\left(\frac{1}{z}\right) - 1, \nonumber
\end{align}
for $z$ in a domain $D_{\mu} = \{ z \in \mathbb{C}^- : 1/z \in \Gamma_{\eta,M} \}$ where $F_{\mu}^{-1}$ is defined;
The free additive convolution of two probability measures $\mu_{1} , \mu_{2}$ on $\mathbb{R}$ is defined
as the probability measure $\mu_{1} \boxplus \mu_{2}$ on $\mathbb{R}$ such that $\phi_{\mu_{1} \boxplus \mu_{2}} \left(z\right) = \phi_{\mu_{1}}\left(z\right) + \phi_{\mu_{2}}\left(z\right)$ or
equivalently
$C_{\mu_{1} \boxplus \mu_{2}} \left(z\right) = C_{\mu_{1}}\left(z\right) + C_{\mu_{2}}\left(z\right)$
for $z \in D_{\mu_{1}} \cap D_{\mu_{2}}$ . It turns out that $\mu_{1} \boxplus \mu_{2}$ is the distribution of the sum $X_{1} +X_{2}$ of
two free random variables $X_{1}$ and $X_{2}$ having distributions $\mu_{1}$ and $\mu_{2}$ respectively.
On the other hand, the free multiplicative operation $\boxtimes$ on $\mathcal{M}$ is defined as follows (see \cite{bercovici1993free}). Let $\mu_{1}$, $\mu_{2}$  be probability measures on $\mathbb{R}$, with $\mu_{1} \in \mathcal{M}_{+}$  and let $X_{1}$, $X_{2}$ be
free random variables such that $\mu_{X_{i}} = \mu_{i}$ . Since $\mu_{1}$ is supported on $\mathbb{R}^{+}$ , $X_{1}$ is a
positive self-adjoint operator and $\mu_{X_{1}^{1/2}}$ is uniquely determined by $\mu_{1}$. Hence the
distribution $\mu_{X_{1}^{1/2}X_{2}X_{1}^{1/2}}$ of the self-adjoint operator $X_{1}^{1/2}X_{2}X_{1}^{1/2}$ is determined by
$\mu_{1} $ and $\mu_{2}$. This measure is called the free multiplicative convolution of $\mu_{1} $ and $\mu_{2} $
and it is denoted by $\mu_{1} \boxtimes \mu_{2} $ . This operation on $\mathcal{M}_+$ is associative and commutative.

We recall (Theorem $1.3$ and Theorem $1.5$ of \cite{taylorseries}) the remainder terms in Laurent series expansion of Cauchy and Voiculescu transforms for probability measures $\mu$ with finite $p$ moments and summarize the following expressions from \cite{hazra1}:
\begin{align}
r_{G_\mu}(z) &= z^{p+1} \left(G_\mu(z) - \sum_{j=1}^{p+1}m_{j-1}(\mu)z^{-j}\right)\label{eq: rG defn}
\intertext{and}
r_{\phi_\mu}(z) &= z^{p-1} \left(\phi_\mu(z) - \sum_{j=0}^{p-1}\kappa_{j+1}(\mu)z^{-j}\right),\label{eq: rphi defn}
\end{align}
where $\{m_j\left(\mu\right):j\le p\}$ and $\{\kappa_j\left(\mu\right):j\le p\}$ denotes the moment and free cumulant sequences of the probability measure $\mu$, respectively. 


\subsection{Classical infinite divisibility and known results} \label{subsec: id}
A probability measure $\mu$ is called classically infinitely divisible, if for every $n\in\mathbb{N}$, there exists a probability measure $\mu_n$ such that $\mu = \mu_n * \mu_n * \cdots * \mu_n (n \text{ times})$, where $*$ is the classical convolution of probability measures. A detailed description about classical infinite divisibility can be found in \cite{sato1999ki}.  It is well known that a probability measure $\mu$ on $\mathbb{R}$ is classically infinitely divisible if and only if its classical cumulant transform $C_\mu^*\left(w\right) := \log\int_\mathbb{R} e^{iwx} d\mu\left(x\right)$ has the following L\'evy-Khintchine representation (see \cite{sato1999ki} or \cite{barndIDnote})
\begin{equation}\label{eq: LK id111}
C_\mu^*\left(w\right) = i\eta w - \frac{1}{2}aw^2 + \int_\mathbb{R}\left(e^{iwt}-1-iwt\textbf{1}_{[-1,1]}\left(t\right)\right)d\nu\left(t\right), \hspace{.5cm}  w\in\mathbb{R},
\end{equation}
where $\eta\in\mathbb{R}$, $a\geqslant0$ and $\nu$ is a L\'evy measure on $\mathbb{R}$, that is, $\int_\mathbb{R}\min\left(1,t^2\right)d\nu\left(t\right)<\infty$ and $\nu\left(\{0\}\right)=0$. If this representation exists, the triplet ($\eta,a,\nu$) is called the classical characteristic triplet of $\mu$ and the triplet is unique.

Another form of $C_\mu^*\left(w\right)$ is given by
\begin{equation}
C_\mu^*\left(w\right) = i\gamma w + \int_\mathbb{R}\left(e^{iwt}-1-\dfrac{iwt}{1+t^2}\right)\frac{1+t^2}{t^2}d\sigma\left(t\right), \hspace{.5cm}  w\in\mathbb{R}, \nonumber
\end{equation}
where $\gamma$ is a real constant and $\sigma$ is a finite measure on $\mathbb{R}$.

One has the following relationships between the two representations (see equations $(2.3)$ below definition $2.1$ in \cite{barndIDnote}):
\begin{align}
a &= \sigma(\{0\}), \nonumber \\  
d\nu(t) &= \frac{1+t^2}{t^2}\mathbf{1}_{\mathbb{R} \setminus \{0\}}d\sigma(t), \label{cfrel1}\\ 
\eta &= \gamma + \int_{\mathbb{R}}t\left(\mathbf{1}_{[-1,1]}(t) - \frac{1}{1+t^2}\right)d\nu(t)\label{cfrel2}.
\end{align}
 In general, when one does not have the Brownian component, it is easier to consider the Laplace transform (if exists) of the measure, for example, in the case of compound Poisson. In this situation let us recall the classical result which studies the tail equivalence of L\'evy measure and the infinitely divisible distribution. In \cite{embrechts1979subexponentiality} it was shown subexponentiality is a property  which makes an infinitely divisible measure and its L\'evy measure tail equivalent.
\begin{theorem}[\citealp{embrechts1979subexponentiality}] \label{thm: embrechts}
Let $\mu$ be a classical infinitely divisible probability measure on $[0,\infty)$. Suppose $\mu$ has the L\'evy-Khintchine representation of the form,
\begin{align*}
f(s) = \int_{0-}^{\infty}e^{-st}d\mu(t) = exp\left\lbrace-as - \int_{0}^{\infty}(1 - e^{-st})d\nu(t)\right\rbrace
\end{align*} 
where $\nu$ is a L\'evy measure satisfying $\int_0^{\infty}\min\{1,t\}d\nu(t) < \infty$. Then the following statements are equivalent:
\begin{enumerate}
 \item[(a)] $\mu$ is subexponential,
 \item[(b)] $\overline{\nu}$ is subexponential,
 \item[(c)] $\mu(x,\infty) \sim \nu(x,\infty)$ as $x\rightarrow\infty$.
\end{enumerate}
Here, the probability measure $\overline{\nu}$, supported on the interval $(1,\infty)$ is defined by $$\overline{\nu}(1, x) = \nu(1,x]/\nu(1,\infty)\,.$$
\end{theorem}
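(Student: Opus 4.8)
The plan is to reduce the statement to the classical theory of subexponential compound Poisson sums by peeling off the large-jump part of the Lévy measure. Since $\int_0^\infty\min(1,t)\,d\nu(t)<\infty$, the restriction $\nu_\ell:=\nu|_{(1,\infty)}$ is a \emph{finite} measure of total mass $\lambda:=\nu(1,\infty)$, whereas $\nu_s:=\nu|_{(0,1]}$ carries whatever mass $\nu$ accumulates near the origin. Splitting the exponent of the Laplace transform along this decomposition factors $f=f_s\cdot f_\pi$, where
\begin{align*}
f_s(s) &= \exp\left\{-as-\int_0^1(1-e^{-st})\,d\nu(t)\right\}, \\
f_\pi(s) &= \exp\left\{-\int_1^\infty(1-e^{-st})\,d\nu(t)\right\},
\end{align*}
so that $\mu=\mu_s*\pi$. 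Here $\pi$ is the compound Poisson law with intensity $\lambda$ and jump law $\overline{\nu}=\nu_\ell/\lambda$, namely $\pi=e^{-\lambda}\sum_{n\geqslant0}\frac{\lambda^n}{n!}\,\overline{\nu}^{\,*n}$, and $\mu_s$ is the infinitely divisible law with drift $a$ and Lévy measure $\nu_s$ supported on $(0,1]$. First I would record that $\mu_s$ is exponentially light: the elementary bound $e^{\beta t}-1\leqslant \beta e^{\beta}t$ on $(0,1]$ gives $\int_0^1(e^{\beta t}-1)\,d\nu(t)\leqslant \beta e^{\beta}\int_0^1 t\,d\nu(t)<\infty$ for every $\beta>0$, whence $\int_0^\infty e^{\beta x}\,d\mu_s(x)<\infty$ for all $\beta$. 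In particular $\overline{\mu_s}(x)$ decays faster than any exponential, so $\overline{\mu_s}(x)=\lito(\overline{\pi}(x))$ whenever $\pi$ is long-tailed, because long-tailed (in particular subexponential) tails are heavier than every exponential.

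The analytic core is the compound-Poisson subexponentiality equivalence: for $\pi$ as above, the three statements \emph{``$\overline{\nu}$ is subexponential''}, \emph{``$\pi$ is subexponential''}, and \emph{``$\overline{\pi}(x)\sim\lambda\,\overline{\nu}(x,\infty)=\nu(x,\infty)$''} are equivalent. This is the classical theorem on subexponential compound sums: one uses $\overline{\rho^{*n}}(x)\sim n\,\overline{\rho}(x)$ for a subexponential $\rho$ together with Kesten's bound $\overline{\rho^{*n}}(x)\leqslant K(1+\varepsilon)^n\overline{\rho}(x)$ to dominate the summands, after which the exponential decay of the Poisson weights $e^{-\lambda}\lambda^n/n!$ kills the factor $(1+\varepsilon)^n$ and licenses interchanging $\lim_{x\to\infty}$ with $\sum_n$. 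I would quote this lemma (Chistyakov; Embrechts--Goldie--Veraverbeke) rather than reprove it, as it is the place where the whole mechanism is concentrated.

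Finally I would assemble the three equivalences by transferring tails across the light factor $\mu_s$. Invoking the convolution-closure property of the class $\mathcal{S}$ of subexponential laws --- if $G\in\mathcal{S}$ and $\overline{H}(x)=\lito(\overline{G}(x))$ then $G*H\in\mathcal{S}$ with $\overline{G*H}(x)\sim\overline{G}(x)$, and, conversely, stripping a light-tailed convolution factor off a subexponential law again yields a subexponential law with the same tail --- gives that $\mu=\mu_s*\pi$ is subexponential if and only if $\pi$ is, and that in that case $\overline{\mu}(x)\sim\overline{\pi}(x)$. Chaining this with the compound-Poisson equivalence yields $(a)\Leftrightarrow(b)$, while the asymptotic $\overline{\pi}(x)\sim\nu(x,\infty)$ upgrades this to $\overline{\mu}(x)\sim\nu(x,\infty)$, giving $(c)$; conversely, once $(c)$ is known one deduces $\overline{\pi}(x)\sim\nu(x,\infty)$ and the compound-Poisson lemma returns $(b)$.

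I expect the main obstacle to be the \emph{reverse} direction of the light-tailed convolution step --- deducing subexponentiality of the factor $\pi$ from subexponentiality of the product $\mu$ --- together with the uniform-in-$n$ domination needed to pass the limit through the Poisson series in the compound-Poisson lemma. Both are precisely the points at which the closure theory of $\mathcal{S}$ developed by Embrechts--Goldie--Veraverbeke is indispensable, and a careless argument risks circularity by presupposing the long-tailedness it is meant to establish.
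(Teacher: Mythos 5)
The paper does not prove this statement: it is quoted verbatim as a known classical result of Embrechts, Goldie and Veraverbeke, with \cite{embrechts1979subexponentiality} as the source, so there is no in-paper proof to compare against. Your outline is a faithful reconstruction of the argument in that cited source: the splitting of $\nu$ at $1$ into $\mu=\mu_s*\pi$ with $\mu_s$ possessing all exponential moments, the Kesten-bound/dominated-convergence treatment of the compound Poisson series, and the transfer of tails across the light factor via the closure and factorization properties of the class $\mathcal{S}$ are exactly the ingredients of their proof. You also correctly isolate the genuinely delicate step, namely deducing subexponentiality of $\pi$ from that of $\mu=\mu_s*\pi$; this does require the factorization result (if $\mu\in\mathcal{S}$ then $\mu$ is long-tailed, so $\overline{\mu_s}(x)=\lito(\overline{\mu}(x))$, whence $\overline{\pi}(x)\sim\overline{\mu}(x)$ and $\pi\in\mathcal{S}$ by closure of $\mathcal{S}$ under tail equivalence) rather than a naive reversal of the forward convolution lemma, and your sketch handles this correctly by appealing to that theory.
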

The remarkable feature of this result is that tail equivalence gives subexponentiality. In Section~\ref{subsec:mainresults} we will address the partial extension of this result in the free setting. 
\subsection{Free infinite divisibility and free regular probability measures}
Free infinitely divisible probability measures are defined in analogy with classical infinitely divisible probability measures. Infinitely divisible measures can also be described in terms of a representation through Voiculescu and free cumulant transforms. A probability measure $\mu$ is called \textit{free infinitely divisible}, if for every $n\in\mathbb N$, there exists a probability measure $\mu_n$ such that $\mu = \mu_n \boxplus \mu_n \boxplus \cdots \boxplus \mu_n (n \text{ times})$ holds. Also a probability measure $\mu$ on $\mathbb{R}$ is $\boxplus$-infinitely divisible i.e. free infinitely divisible if and only if there exists a finite measure $\sigma$ on $\mathbb{R}$ and a real constant $\gamma$, such that 
\begin{equation}\label{voiculescu}
\phi_{\mu}\left(z\right) = \gamma + \int_{\mathbb{R}}\frac{1+zt}{z-t}d\sigma\left(t\right), \hspace{1cm} z\in\mathbb{C}^+
\end{equation}

A probability measure $\mu$ on $\mathbb{R}$ is $\boxplus$-infinitely divisible if and only if the free cumulant transform has the representation:
\begin{equation}\label{lkr}
C_\mu^\boxplus(z) = \eta z + az^2 + \int_\mathbb{R}\Big(\frac{1}{1-zt}-1-tz\textbf{1}_{[-1,1]}(t)\Big)d\nu(t),  z\in\mathbb{C}^-,
\end{equation}
where $\eta\in\mathbb{R}$, $a\geqslant0$ and $\nu$ is called the L\'evy measure on $\mathbb{R}$. In the expressions \eqref{voiculescu} and \eqref{lkr}, similar to the equations \eqref{cfrel1} and \eqref{cfrel2} holds true (see \cite[ Proposition $4.16$]{barndIDnote}). The free characteristic triplet $(\eta,a,\nu)$ of a probability measure $\mu$ is unique.


For a free infinitely divisible probability measure $\mu$ on $\mathbb{R}$ where the L\'evy measure (Definition $2.1$ in \cite{barndIDnote}) $\nu$ satisfies $\int_{\mathbb{R}}\min\left(1,|t|\right)d\nu\left(t\right)<\infty$ and $a=0$ the L\'evy-Khintchine representation \eqref{lkr} reduces to 
\begin{equation} \label{cumulanttt}
C_\mu^\boxplus\left(z\right) = \eta'z + \int_\mathbb{R}\left(\dfrac{1}{1-zt}-1\right)d\nu\left(t\right), \, \,\,  z \in \mathbb{C}^-,
\end{equation}
where $\eta'\in\mathbb{R}$. The measure $\mu$ is called a \textbf{free regular infinitely divisible distribution} (or \textbf{regular $\boxplus$-infinitely divisible measure}) if $\eta'\geqslant0$ and $\nu((-\infty,0])=0$.

The most typical example is compound free Poisson distributions. If the drift term $\eta^{\prime}$ is zero and the L\'evy measure $\nu$ is $\lambda\rho$ for some constant $\lambda > 0$ and a probability measure $\rho$ on $\mathbb{R}$, then we call $\mu$ a compound free Poisson distribution with rate $\lambda$ and jump distribution $\rho$. To clarify these parameters, we denote $\mu = \pi\left(\lambda, \rho\right)$.
\begin{example}[\citealp{ariz12}, Remark~8]\label{cexamp}\
\begin{enumerate}
\item The Marchenko-Pastur law $m$ is a compound free Poisson with rate 1 and
jump distribution $\delta_{1}$ .
\item \label{fat:2} The compound free Poisson $\pi\left(1, \rho\right)$ coincides with the free multiplication $m \boxtimes \rho$.
\end{enumerate}
\end{example}

We shall use both the Voiculescu transform and the cumulant transform to state our theorems. The notations $\nuplus$ or $\muminus$ shall occur whenever we write the Voiculescu transform or the cumulant transform of a free regular probability measure $\mu$ respectively. The indices $V$ and $C$ are used to distinguish between the occurrence in Voiculescu transform or in the cumulant transform. The use of $\gamma$, $\sigma$, $\eta'$ and $\nu$ in the indices are clear from \eqref{bercovici} and \eqref{bercumulant} while in $\muminus$, the index $0$ is to indicate the non existence of the Gaussian part in the representation of the cumulant transform. Let $\nuplus=\muminus$ be a free regular infinitely divisible probability measure. Then its Voiculescu and cumulant transforms have the representations:
\begin{align} \label{bercovici}
\phi_\nuplus\left(z\right) &= \gamma + \int_{\mathbb{R}^+} \frac{1+tz}{z-t} d \sigma\left(t\right),\\
C_{\muminus}\left(z\right) &= \eta'z + \int_{\mathbb{R}^+}\left(\dfrac{1}{1-zt}-1\right)d\nu\left(t\right)\label{bercumulant}
\end{align}
respectively following \eqref{voiculescu} and \eqref{cumulanttt}. In the above representation the pair $(\gamma, \sigma)$ is related to $(\eta^\prime, \nu)$ in the following way:
\begin{align} \label{ravan}
\begin{split}
d\sigma(t) &= \frac{t^2}{1+t^2}d\nu(t),  \\
\gamma &= \eta^{\prime} + \int_{\mathbb{R}^+}\frac{t}{1+t^2}d\nu(t),  \hspace{.7cm}\eta^{\prime} \geqslant 0.
\end{split}
\end{align}
The proof of Theorem \ref{main theorem-2} demands the finiteness of the measure $\sigma$ appearing the Voiculescu transform while the L\'evy measure may not be a finite measure for a free regular infinitely divisible measure. 

\section{Main results and their proofs}\label{subsec:mainresults}
Now we are ready to state the main results of the paper while keeping in mind all the notations defined above. The following theorem gives us the tail equivalence between a free regular probability measure and the finite measure $\sigma$ occurring in the Voiculescu transform.
\begin{theorem}\label{main theorem-2}
Suppose that $\nuplus$ is free regular infinitely divisible measure. Assume that either $\nuplus$ or $\sigma$ is concentrated on $[0,\infty)$. Then the following statements are equivalent:
\begin{enumerate}
\item $\nuplus$ has regularly varying tail of index $-\alpha$.
\item $\sigma$ has regularly varying tail of index $-\alpha$.
\end{enumerate}
If either of the above holds, then $\nuplus\left(x,\infty\right) \sim \sigma\left(x,\infty\right)$ as $x\rightarrow\infty$.
\end{theorem}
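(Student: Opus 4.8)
The plan is to turn the tail equivalence into a statement purely about Cauchy transforms and then to feed it into the regular-variation dictionary of \cite{hazra1}. The crucial elementary fact is the identity $\frac{1+tz}{z-t}=\frac{1+z^2}{z-t}-z$, valid for every $t$ and requiring no integrability of $t$ against $\sigma$ (which matters, since $\int t\,d\sigma$ may diverge when $\alpha<1$). Substituting it into \eqref{bercovici}, writing $c:=\sigma(\mathbb{R}^+)<\infty$ and letting $G_{\sigma}(z):=\int_{\mathbb{R}^+}(z-t)^{-1}\,d\sigma(t)$ denote the Cauchy transform of the finite measure $\sigma$, one obtains
\begin{equation*}
\phi_{\nuplus}(z)=\gamma-cz+(1+z^2)\,G_{\sigma}(z),\qquad z\in\mathbb{C}^+ .
\end{equation*}
Thus $\phi_{\nuplus}$ is, up to an explicit affine term and the factor $1+z^2$, the Cauchy transform of $\sigma$; finiteness of $\sigma$ (stressed after \eqref{ravan}) is exactly what lets us treat $G_{\sigma}$ by the theory of \cite{hazra1}, even though the Lévy measure may be infinite. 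I would also note at the outset that, by \eqref{ravan}, $\tfrac{t^2}{1+t^2}\uparrow1$ forces $\sigma(x,\infty)\sim\nu(x,\infty)$, so statement (2) may equally be read for $\nu$, and the one-sidedness hypothesis ensures we are genuinely analysing a right tail.

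Next I would compare the remainders defined in \eqref{eq: rG defn} and \eqref{eq: rphi defn}. Assume $\sigma$ (equivalently $\bar{\sigma}:=\sigma/c$) is regularly varying of index $-\alpha$, and let $p$ be the integer with $p\le\alpha\le p+1$, so $\bar{\sigma}\in\mathcal{M}_{p,\alpha}$. Inserting $G_{\sigma}(z)=\sum_{j=1}^{p+1}m_{j-1}(\sigma)z^{-j}+z^{-(p+1)}r_{G_{\sigma}}(z)$ into the identity, the term $-cz$ cancels the top monomial $z^2\cdot m_0(\sigma)z^{-1}=cz$, while the coefficients of $z^{0},\dots,z^{-(p-1)}$ reassemble, through the moment–cumulant correspondence underlying \eqref{bercovici}, into the free-cumulant polynomial $\sum_{j=0}^{p-1}\kappa_{j+1}(\nuplus)z^{-j}$ subtracted in \eqref{eq: rphi defn}. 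What survives is
\begin{equation*}
r_{\phi_{\nuplus}}(z)=(1+z^{-2})\,r_{G_{\sigma}}(z)+m_{p-1}(\sigma)z^{-1}+m_{p}(\sigma)z^{-2}.
\end{equation*}
Since $r_{G_{\sigma}}$ is regularly varying of index $p-\alpha\in(-1,0)$ while the two stray monomials are $\bigo(z^{-1})$, and $p-\alpha>-1$, those monomials are negligible; hence $r_{\phi_{\nuplus}}(z)\sim r_{G_{\sigma}}(z)$ as $z\to\infty$ n.t.

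The conclusion then comes from applying \cite{hazra1} to the two measures separately. Its dictionary asserts, for a measure in $\mathcal{M}_{p,\alpha}$, that regular variation of the tail is equivalent to regular variation of the Cauchy-transform remainder and of the Voiculescu-transform remainder, and moreover that $r_{G_\mu}(z)\sim r_{\phi_\mu}(z)$ with a common constant tying both to the tail (consistent with the passage $\phi_\mu=F_\mu^{-1}-z$, $F_\mu=1/G_\mu$, under which the leading singular parts coincide). Applied to $\sigma$ it converts $r_{G_{\sigma}}$ into $\sigma(\cdot,\infty)$, and applied to $\nuplus$ it converts $r_{\phi_{\nuplus}}$ into $\nuplus(\cdot,\infty)$, with matching index and constant. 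Reading the chain forwards gives the Abelian implication $(2)\Rightarrow(1)$ together with $\nuplus(x,\infty)\sim\sigma(x,\infty)$; because the dictionary is itself an equivalence, reading it backwards gives the Tauberian implication $(1)\Rightarrow(2)$: regular variation of $\nuplus$ forces that of $r_{\phi_{\nuplus}}$, hence of $r_{G_{\sigma}}$ by the displayed relation, hence of $\sigma$, and the shared constant again yields the sharp equivalence.

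I expect the main obstacle to lie in the Tauberian half and the constant bookkeeping rather than the Abelian half, which is a routine Karamata computation on the integral in \eqref{bercovici}. Three points demand care: (i) the integer boundary cases $\alpha=p$ and $\alpha=p+1$, where the number of subtracted moment/cumulant terms changes and the orders of the stray monomials must be rechecked to keep them below $r_{G_{\sigma}}$; (ii) the assertion that the coefficients of $z^{0},\dots,z^{-(p-1)}$ produced by $(1+z^2)G_{\sigma}$ are exactly the free cumulants of $\nuplus$, i.e. that the implicit moment–cumulant relation is the one forced by \eqref{bercovici} and \eqref{bercumulant}; and (iii) confirming that the constant linking $r_{G_\mu}$ and $r_{\phi_\mu}$ to the tail is genuinely common, since only then does $\nuplus(x,\infty)/\sigma(x,\infty)\to1$ rather than to some other positive number. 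All three are precisely where the full strength of \cite{hazra1} must be invoked.
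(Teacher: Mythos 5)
Your overall strategy is exactly the paper's: the identity $\phi_{\nuplus}(z)=\gamma-m_0(\sigma)z+(1+z^2)m_0(\sigma)G_{\overline\sigma}(z)$, the induced relation between $r_{\phi_{\nuplus}}$ and $r_{G_{\overline\sigma}}$, and then the regular-variation dictionary of \cite{hazra1} applied to each measure, with the matching constants giving the sharp tail equivalence. That part of your argument is sound, including your observation that the stray monomials $m_{p-1}(\sigma)z^{-1}+m_p(\sigma)z^{-2}$ are dominated by the remainder (the paper handles the boundary case $\alpha=p+1$ by splitting \eqref{eq: id remainder} at $z=iy$ into real and imaginary parts and invoking Theorem~\ref{thm: error equiv new} on the real part, which is the clean way to do what you defer to your caveat (i)).

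There is, however, a genuine gap: before you can apply Theorem~\ref{thm: error equiv} (or \ref{thm: error equiv new}) to \emph{both} measures, you must know that $\nuplus$ and $\overline\sigma$ lie in the \emph{same} class $\mathcal M_p$ --- i.e.\ that both are supported on $[0,\infty)$ and have exactly $p$ finite moments. The hypothesis only gives positivity of \emph{one} of the two measures, and regular variation of one of them only pins down the moment order of \emph{that} one. Without this, $r_{\phi_{\nuplus}}$ as defined in \eqref{eq: rphi defn} (which subtracts the first $p$ free cumulants of $\nuplus$) is not even well defined in the direction $(2)\Rightarrow(1)$, and the dictionary cannot be invoked for $\nuplus$; symmetrically for $\sigma$ in the other direction. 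Your phrase ``the coefficients reassemble, through the moment--cumulant correspondence, into the free-cumulant polynomial'' presupposes precisely what has to be proved. In the paper this is the second part of Lemma~\ref{cor: id} and is the bulk of the work: positivity of the support is transferred via \eqref{ravan} and Lemma~9 of \cite{bena-surprize}, and the equivalence of moment orders is proved by truncating $\sigma$ to $\sigma_n=\sigma(\cdot\cap[0,n))$ (respectively rescaling to $\nuplusgn$), using weak convergence of the associated free regular measures, the moment--cumulant formula over non-crossing partitions, and Fatou's lemma to bound $m_p(\nuplus)$ by $m_p(\sigma)$ and conversely. You should either supply such an argument or explicitly cite it; as written, the proposal silently assumes the conclusion of that lemma.
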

We fix the notations $m_{-1}(\sigma) = \gamma$, $m_{0}(\sigma) = \sigma(\mathbb{R}^+)$ and $\overline\sigma$ for the probability measure $\sigma/m_0\left(\sigma\right)$. Recall the remainder terms of the Cauchy and Voiculescu transforms as defined in \eqref{eq: rG defn} and \eqref{eq: rphi defn} respectively. To prove Theorem \ref{main theorem-2}, we first state and prove the following Lemma.
\begin{lemma} \label{cor: id}
Let $\nuplus$ be a regular $\boxplus$-infinitely divisible probability measure.
\begin{enumerate}
\item Voiculescu transform of $\nuplus$ and Cauchy transform of $\overline\sigma$ are related by \label{cor: id transf}
    \begin{equation} \label{eq: id transf}
      \phi_\nuplus\left(z\right) = m_{-1}\left(\sigma\right) - m_0\left(\sigma\right)z + \left(1+z^2\right) m_0\left(\sigma\right) G_{\overline\sigma}\left(z\right).
    \end{equation}
\item 
If either $\nuplus$ or $\sigma$ has its support concentrated on $[0, \infty)$, then so does the other. \label{cor: id suppextra}
Further, in this case, $\nuplus$ and $\sigma$ have same number of moments. 
\item If both $\nuplus$ and $\sigma$ have $p$ moments, then the $p$ cumulants of $\nuplus$ and $p$ moments of $\sigma$ satisfy the relation
\begin{equation}\label{cumulants-id}
\kappa_p\left(\nuplus\right)=m_{p-2}\left(\sigma\right)+m_p\left(\sigma\right)
\end{equation}
and the remainder terms of $\phi_{\nuplus}$ and $G_{\overline\sigma}$ satisfy
    \begin{equation} \label{eq: id remainder}
    r_{\phi_\nuplus}\left(z\right) = m_{p-1}\left(\sigma\right)z^{-1} + m_p\left(\sigma\right) z^{-2} + \left(1+z^{-2}\right) m_0\left(\sigma\right) r_{G_{\overline\sigma}}\left(z\right).
 \end{equation}
\end{enumerate}
\end{lemma}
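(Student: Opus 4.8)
The plan is to handle the three parts in sequence, since part (3) rests on the identity of part (1). For part (1), I would begin from the Voiculescu representation \eqref{bercovici} and use the elementary partial-fraction identity
\[
  \frac{1+tz}{z-t} = -z + \frac{1+z^2}{z-t},
\]
valid for every $t\in\mathbb{R}^+$ and $z\in\mathbb{C}^+$. Integrating against $d\sigma(t)$, recalling that $m_0(\sigma)G_{\overline\sigma}(z)=\int(z-t)^{-1}d\sigma(t)$, and using the conventions $\gamma=m_{-1}(\sigma)$ and $\sigma(\mathbb{R}^+)=m_0(\sigma)$, one reads off \eqref{eq: id transf} directly. This step is purely algebraic and carries no analytic subtlety.

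For part (2) I would use the density relation $d\sigma(t)=\frac{t^2}{1+t^2}d\nu(t)$ from \eqref{ravan}. Since $\frac{t^2}{1+t^2}>0$ on $\mathbb{R}\setminus\{0\}$ and $\nu(\{0\})=0$, the measures $\sigma$ and $\nu$ are mutually absolutely continuous off the origin and hence have the same support there; in particular $\sigma$ is concentrated on $[0,\infty)$ iff $\nu$ is. As $\nuplus$ is free regular we have $\eta'\ge 0$, so the characterization of free regular measures in \cite{ariz12} equates concentration of the law $\nuplus$ on $[0,\infty)$ with $\nu$ being supported on $[0,\infty)$; chaining the two equivalences proves the support claim. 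For the number of moments I would write $m_k(\sigma)=\int t^k\frac{t^2}{1+t^2}\,d\nu(t)$ and note that, because $\int\min(1,t)\,d\nu<\infty$ tames the integrand near $0$, $m_k(\sigma)$ is finite iff $\int_{[1,\infty)}t^k\,d\nu$ is. Expanding \eqref{bercumulant} as $C_{\muminus}(z)=\sum_{n\ge 1}\kappa_n(\nuplus)z^n$ gives $\kappa_n(\nuplus)=\int t^n\,d\nu$ for $n\ge 2$, so $\sigma$, $\nu$, and the free cumulant sequence of $\nuplus$ all terminate at the same order; since the number of finite moments of a measure equals the number of finite free cumulants (the moment--cumulant relations being finite polynomial identities), $\nuplus$ and $\sigma$ share the same number of moments.

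For part (3) I would derive \eqref{cumulants-id} and \eqref{eq: id remainder} together by substituting the exact Laurent expansion of $G_{\overline\sigma}$ at level $p$,
\[
  G_{\overline\sigma}(z) = \sum_{j=1}^{p+1}m_{j-1}(\overline\sigma)z^{-j} + z^{-(p+1)}r_{G_{\overline\sigma}}(z),
\]
into the identity \eqref{eq: id transf}. Expanding the factor $(1+z^2)m_0(\sigma)G_{\overline\sigma}(z)$ and regrouping by powers of $z$, the multiplication by $1+z^2$ shifts the moment coefficients so that the coefficient of $z^{-k}$ for $1\le k\le p-1$ equals $m_{k-1}(\sigma)+m_{k+1}(\sigma)$; comparing with the defining expansion \eqref{eq: rphi defn} of $r_{\phi_\nuplus}$, whose polynomial part is $\sum_{j=0}^{p-1}\kappa_{j+1}(\nuplus)z^{-j}$, forces $\kappa_{k+1}(\nuplus)=m_{k-1}(\sigma)+m_{k+1}(\sigma)$, which at $k=p-1$ is precisely \eqref{cumulants-id} (the conventions $m_{-1}(\sigma)=\gamma$ and $m_0(\sigma)=\sigma(\mathbb{R}^+)$ absorbing the drift and the mass). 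Subtracting this polynomial part leaves $m_{p-1}(\sigma)z^{-p}+m_p(\sigma)z^{-(p+1)}+(1+z^2)m_0(\sigma)z^{-(p+1)}r_{G_{\overline\sigma}}(z)$; multiplying by $z^{p-1}$ and using $(1+z^2)z^{-2}=1+z^{-2}$ yields \eqref{eq: id remainder}.

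The only place demanding care is the bookkeeping in part (3): one must expand $G_{\overline\sigma}$ to level $p$ rather than $p-1$, so that after the shift by $z^2$ the residual polynomial terms line up exactly with the $z^{-p}$ and $z^{-(p+1)}$ contributions in \eqref{eq: id remainder}; expanding to the wrong order produces a superficially different, unmatched expression. I would also verify that $r_{G_{\overline\sigma}}(z)=m_0(\sigma)^{-1}\int t^{p+1}(z-t)^{-1}\,d\sigma(t)$ is well defined although $\sigma$ possesses only $p$ moments, the integrand decaying like $t^p$ at infinity and hence being integrable against $\sigma$; this is what legitimizes the level-$p$ expansion. Everything else is routine computation, and the sole conceptual input is the support characterization of \cite{ariz12} invoked in part (2).
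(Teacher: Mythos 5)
Parts (1) and (3) of your proposal are correct and follow essentially the same route as the paper: the partial-fraction identity $\frac{1+tz}{z-t}=-z+\frac{1+z^2}{z-t}$ for part (1), and substitution of the order-$(p+1)$ Laurent expansion of $G_{\overline\sigma}$ into \eqref{eq: id transf} followed by regrouping for part (3). (In part (3) you should still say explicitly that the leftover $m_{p-1}(\sigma)z^{-1}+m_p(\sigma)z^{-2}+(1+z^{-2})m_0(\sigma)r_{G_{\overline\sigma}}(z)$ is $\lito(1)$ as $z\to\infty$ n.t.\ and invoke uniqueness of the Laurent expansion; without that, comparison with \eqref{eq: rphi defn} does not ``force'' anything.)

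The genuine gap is in the second half of part (2), the claim that $\nuplus$ and $\sigma$ have the same number of moments. Your argument rests on the assertion that ``the number of finite moments of a measure equals the number of finite free cumulants (the moment--cumulant relations being finite polynomial identities).'' This is circular as stated: the free cumulants $\kappa_n(\mu)$ are \emph{defined} from the moments $m_1(\mu),\dots,m_n(\mu)$, so the quantity ``number of finite free cumulants'' has no independent meaning. What you actually have in hand is that the \emph{formal} coefficients $\int t^n\,d\nu$ in the expansion of $C_{\muminus}$ are finite for $n\le p$; to conclude from this that $\nuplus$ has a finite $p$-th moment (and that these coefficients coincide with its combinatorial free cumulants) is precisely the nontrivial analytic content, namely the equivalence between membership in $\mathcal M_p$ and the existence of a genuine Laurent expansion of the Voiculescu/cumulant transform with $\lito$-remainder (\citet[Theorems 1.3, 1.5]{taylorseries}). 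The paper devotes the bulk of the proof of this lemma to exactly this point: for one direction it truncates $\sigma$ to $\sigma_n=\sigma(\cdot\cap[0,n))$, uses \citet[Theorem 3.8]{barndorff2002} to get weak convergence of the associated free regular laws, applies Fatou's lemma, and bounds $m_p$ of the truncated laws via the free moment--cumulant formula (which is legitimate there because the $\sigma_n$ are compactly supported); for the converse it works with the convolution powers $\mu^{\boxplus 1/n}$, the uniqueness of Laurent expansions, and \citet[Theorem 5.10(iii)]{bercovici1993free}. None of this is replaceable by the observation that the moment--cumulant relations are polynomial. Separately, in the support claim your appeal to an ``equivalence'' in \cite{ariz12} between positivity of $\nuplus$ and positivity of $\nu$ is stated too strongly (positively supported $\boxplus$-infinitely divisible laws need not be free regular); the direction you actually need --- that a free regular triplet with $\sigma$ on $[0,\infty)$ and $\eta'\ge 0$ forces $\nuplus$ to be supported on $[0,\infty)$ --- is what the paper obtains from Lemma 9 of \cite{bena-surprize}, and it does require a citation of that precise form rather than of a biconditional.
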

\begin{proof}
\begin{enumerate}
\item Using~\eqref{bercovici}, we have
\begin{align*}
\phi_{\nuplus}\left(z\right)&=\gamma+m_0\left(\sigma\right) \int_{0}^{\infty}\frac{1+tz}{z-t}d\overline\sigma\left(t\right)\\
&=\gamma+m_0\left(\sigma\right) G_{\overline\sigma}\left(z\right)+m_0\left(\sigma\right) z\int_{0}^{\infty}\frac{t}{z-t}d\overline\sigma\left(t\right)\\
&=m_{-1}\left(\sigma\right)+m_0\left(\sigma\right) G_{\overline\sigma}\left(z\right)-m_0\left(\sigma\right)z+m_0\left(\sigma\right) z^2G_{\overline\sigma}\left(z\right).
\end{align*}

\item Suppose $\nuplus $ is concentrated on the positive axis. Since  $\nuplus= \muminus$, the definition of free regularity ensures that the support of $\nu$ is contained in $[0,\infty)$. From~\eqref{ravan} we have $\sigma$ is concentrated on $[0,\infty)$.  Conversely, let $\sigma$ be supported on $[0,\infty)$. Since $\eta^\prime\ge 0$ it is follows from~\eqref{ravan} that $\int_0^1 \frac{1}{t} d\sigma (t) \leqslant \gamma$. By Lemma 9 of \citet{bena-surprize} it follows that $\nuplus$ is supported on $[0,\infty)$.

Here it is easy to conclude that for a non trivial free regular probability measure $\nuplus$, one must have $\gamma > 0$. Now we shall prove the existence of a $p$ moment of $\sigma$ is equivalent to the existence of a $p$ moment of $\nuplus$.  We shall follow the proof of Proposition 2.3 of \citet{taylorseries}.

 First suppose $\sigma$ admits a moment of order $p$. For all positive integer $n$, let us define the positive finite measure $\sigma_n$ on $[0,\infty)$ by $\sigma_n(A) = \sigma \big(A \cap [0,n)\big)$. By dominated convergence theorem $\sigma_n$ converges weakly to $\sigma$. Thus by \citet[Theorem $3.8$]{barndorff2002} we have $\nuplusn$ converges weakly to $\nuplus$. Therefore, $$\int_0^{\infty}t^p d\nuplus(t) \leqslant \liminf_n \int_0^{\infty}t^p d\nuplusn(t).$$
The range of the integral is $\mathbb{R}^+$ instead of $\mathbb{R}$ because $\nuplusn$ is again a free regular measure (since $\nuplus$ is so) and the first part of Lemma \ref{cor: id}(\ref{cor: id suppextra}) gives $\nuplusn$ is concentrated on $[0,\infty)$. Thus, $$\int_0^{\infty}t^p d\nuplus(t) \leqslant \liminf_n m_p(\nuplusn).$$
To show that $\nuplus$ has $p^{th}$ moment finite, it is enough to show that the sequence $\{m_p(\nuplusn)\}_n$ is bounded. By \citet[equation $(2.1)$]{taylorseries}, we have the $q^{th}$ free cumulant $\kappa_q(\nuplusn) = m_{q-2}(\sigma_n)+ m_q(\sigma_n)$ since $\sigma_n$'s are compactly supported (with the convention that $m_{-1}(\sigma_n)= \gamma$). So, for all $n$,
\begin{align*}
m_p(\nuplus) &= \sum_{\pi \in NC(p)} \prod_{V \in \pi} \kappa_{|V|}(\nuplusn)\\
&= \sum_{\pi \in NC(p)} \prod_{V \in \pi}  \big(m_{q-2}(\sigma_n)+ m_q(\sigma_n)\big)\\
&\leqslant \sum_{\pi \in NC(p)} \prod_{V \in \pi}  \big(m_{q-2}(\sigma)+ m_q(\sigma)\big) < \infty,
\end{align*}
where $NC(p)$ is the set of all non crossing partitions of $\{1,2, \ldots, n\}$ and $|V|$ is the number of elements in the block $V$ of $\pi$.

Next suppose $\nuplus$ admits a moment of order $p$. Then by \citet[Theorem $1.3$]{taylorseries}, $\phi_\nuplusgn$ admits a Lauent series expansion of order $p+1$. Thus for all positive integer $n$, we have $\phi_\nuplusgn(z) = \frac{1}{n}\phi_\nuplus(z)$. Now support of $\nuplusgn(z)$ is contained in $[0,\infty)$ (as $\nuplusgn(z)$ is a free regular measure with $\sigma_n$ has support on $[0,\infty)$) and the uniqueness of the Laurent series expansion allows us to conclude that $\nuplusgn(z)$ has a Laurent series expansion of order $p+1$. Moreover we have $\kappa_i(\nuplusgn) = \frac{1}{n}\kappa_i(\nuplus)$ for all $i \in \{1,2, \ldots, p\}$. From \citet[Theorem $5.10(iii)$]{bercovici1993free}, we conclude that \begin{equation*}
d\sigma(t) = \lim_{n \to \infty} \frac{nt^2}{1+t^2}d\nuplusgn(t).
\end{equation*}
Therefore we have,
\begin{align*}
\int_0^{\infty} t^p d\sigma(t) &\leqslant \liminf_n \int_0^{\infty} \frac{t^pnt^2}{1+t^2}d\nuplusgn(t)\\
&\leqslant \liminf_n \int_0^{\infty} nt^pd\nuplusgn(t)\\
&= \liminf_n n m_p(\nuplusgn)\\
&= \liminf_n \sum_{\pi \in NC(p)} n \prod_{V \in \pi} \kappa_{|V|}(\nuplusgn)\\
&= \liminf_n \sum_{\pi \in NC(p)} n^{1-\#\pi} \prod_{V \in \pi} \kappa_{|V|}(\nuplus) < \infty,
\end{align*}
where in the third line we have used $\nuplusgn((-\infty, 0)) = 0$ since for all $n$, $\nuplusgn$ is free regular and $\sigma/n$ has support on $[0,\infty)$ and in the last line $\#\pi$ indicated the number of blocks in the partition $\pi$.

\item If both $\nuplus$ and $\sigma$ have $p$ moments finite, considering Laurent series expansion of $G_{\overline\sigma}$ in~\eqref{eq: id transf} and the fact that $m_j\left(\sigma\right) = m_0\left(\sigma\right) m_j\left(\overline\sigma\right)$ for $0\le j\le p$, we have
\begin{align*}
\phi_{\nuplus}(z)=&m_{-1}(\sigma)-m_0(\sigma)z+(1+z^2)\sum_{j=1}^{p+1}m_{j-1}(\sigma)z^{-j}\\
&\qquad+(1+z^2)z^{-(p+1)}m_0(\sigma) r_{G_{\overline\sigma}}(z)\\
=&\sum_{j=1}^{p} (m_{j-2}(\sigma)+m_{j}(\sigma))z^{-(j-1)} +z^{-(p-1)} \big( m_{p-1}(\sigma)z^{-1}\\
&\qquad+m_{p}(\sigma)z^{-2}+(1+z^{-2})m_0(\sigma) r_{G_{\overline\sigma}}(z) \big).
\end{align*}
Since $r_{G_\sigma}\left(z\right)=\lito\left(1\right)$ as $z\to\infty$ n.t., we have
$$m_{p-1}\left(\sigma\right)z^{-1}+m_{p}\left(\sigma\right)z^{-2}+\left(1+z^{-2}\right)m_0\left(\sigma\right) r_{G_{\overline\sigma}}\left(z\right) = \lito\left(1\right)$$ as $z\to\infty$ n.t. Thus, by uniqueness of Laurent series expansion (which is equivalent to the uniqueness of Taylor series expansion given in \citet[Lemma~A$.1$]{taylorseries}), we obtain~\eqref{cumulants-id} as well as~\eqref{eq: id remainder}.
\end{enumerate} 
\end{proof}
It can be shown, using the expansions of Voiculescu and Cauchy transforms, that, if $\nuplus$ is a compactly supported probability measure, then $\sigma$ is also compactly supported and their cumulants and moments are related exactly by the formula stated in \eqref{cumulants-id}. Further note that $m_{p-2}\left(\sigma\right) + m_p\left(\sigma\right)$ is also the classical cumulant of a classical infinitely divisible distribution.

It is obvious that Lemma~\ref{cor: id}\eqref{cor: id suppextra} shows the assumptions on the supports of $\nuplus$ or $\sigma$ in Theorem~\ref{main theorem-2} are actually equivalent.

\begin{proof}[Proof of Theorem \ref{main theorem-2}]
First assume that $\nuplus$ is regularly varying with tail index $-\alpha$ for some $\alpha \geqslant 0$. Then there exists a unique nonnegative integer $p$ such that $\alpha\in[p,p+1]$ and the measure $\nuplus\in\mathcal M_{p,\alpha}$. Also, by Lemma~\ref{cor: id}\eqref{cor: id suppextra}, we have $\overline\sigma\in \mathcal M_p$ as well. Furthermore evaluating~\eqref{eq: id remainder} at $z=iy$ and equating the real and the imaginary parts respectively, we have,
\begin{align}
\left(1-y^{-2}\right) m_0\left(\sigma\right) \Re r_{G_{\overline\sigma}}\left(iy\right) - m_p\left(\sigma\right) y^{-2} &= \Re r_{\phi_{\nuplus}}\left(iy\right) \label{eq: Levy real}
\intertext{and}
\left(1-y^{-2}\right) m_0\left(\sigma\right) \Im r_{G_{\overline\sigma}}\left(iy\right) - m_{p-1}\left(\sigma\right)y^{-1} &= \Im r_{\phi_{\nuplus}}\left(iy\right). \label{eq: Levy imaginary}
\end{align}
Now if $\alpha\in[p,p+1)$, using Theorem~\ref{thm: error equiv}, we have from~\eqref{eq: Levy imaginary}, as $y\to\infty$,
\begin{align*}
\left(1-y^{-2}\right) m_0\left(\sigma\right) \Im r_{G_{\overline\sigma}}\left(iy\right) - m_{p-1}\left(\sigma\right)y^{-1} &= \Im r_{\phi_{\nuplus}}\left(iy\right) \\
&\sim -\frac{\frac{\pi\left(p+1-\alpha\right)}2}{\cos\frac{\pi\left(\alpha-p\right)}2} y^p\nuplus\left(y,\infty\right),
\end{align*}
which is regularly varying of index $-\left(\alpha-p\right)$ with $\alpha-p<1$. Thus, as $y\to\infty$,
\begin{align*}
m_0\left(\sigma\right) \Im r_{G_{\overline\sigma}}\left(iy\right) &\sim\left(1-y^{-2}\right)m_0\left(\sigma\right) \Im r_{G_{\overline\sigma}}\left(iy\right)\\
&\sim -\frac{\frac{\pi\left(p+1-\alpha\right)}2}{\cos\frac{\pi\left(\alpha-p\right)}2} y^p\nuplus\left(y,\infty\right)
\end{align*}
 and is also regularly varying of index $-\left(\alpha-p\right)$ and again by Theorem~\ref{thm: error equiv}, $\overline\sigma$ and hence $\sigma$ has regularly varying tail of index $-\alpha$ and
$$\Im r_{G_{\overline\sigma}}\left(iy\right) \sim -\frac{\frac{\pi\left(p+1-\alpha\right)}2}{\cos\frac{\pi\left(\alpha-p\right)}2} y^p\overline\sigma\left(y,\infty\right) \text{ as $y\to\infty$.}$$
Putting two asymptotic equivalences together, we get $\nuplus\left(y,\infty\right)\sim m_0\left(\sigma\right)\overline\sigma\left(y,\infty\right)=\sigma\left(y,\infty\right)$ as same argument works for the case $\alpha=p+1$ with the help of Theorem~\ref{thm: error equiv new} and equation~\eqref{eq: Levy real}.

To get the converse statement, we shall start with $\sigma$ to be regularly varying with index $-\alpha$. Thus $\sigma \in \mathcal M_{p,\alpha}$ for some integer $p \geqslant 0$. Lemma~\ref{cor: id}\eqref{cor: id suppextra} gives $\nuplus \in \mathcal M_p$ also, and we get the equations \eqref{eq: Levy real} and \eqref{eq: Levy imaginary}. Arguing exactly the same way like above we shall be able to conclude that $\nuplus$ is regularly varying with tail index $-\alpha$.   
\end{proof}

Noting the relations between the measures appearing in the L\'evy-Khintchine representations of the Voiculescu and cumulant transform of a free regular measure, the following corollary is immediate and this will also be very important to link our result with the classical one in Corollary \ref{classifreeconnec}.
\begin{corollary} \label{sita}
Suppose $\muminus$ is a free regular infinitely divisible measure. Then the following are equivalent:
\begin{enumerate}
\item $\muminus$ has regularly varying tail of index $-\alpha$.
\item $\nu$ has regularly varying tail of index $-\alpha$.
\end{enumerate}
If either of the above holds, then $\muminus\left(x,\infty\right) \sim \nu\left(x,\infty\right)$ as $x\rightarrow\infty$.
\end{corollary}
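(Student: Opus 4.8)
The plan is to deduce this corollary directly from Theorem~\ref{main theorem-2} by exploiting the explicit relation \eqref{ravan} between the finite measure $\sigma$ appearing in the Voiculescu representation and the L\'evy measure $\nu$ appearing in the cumulant representation of one and the same free regular measure $\nuplus = \muminus$. Since all the hard analytic work (relating tails of the measure to remainder terms of its transforms via regular variation) has already been done in Theorem~\ref{main theorem-2}, what remains is purely a change-of-measure bookkeeping.

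First I would check that the support hypothesis of Theorem~\ref{main theorem-2} holds automatically. Because $\muminus$ is free regular, by definition $\nu$ is concentrated on $[0,\infty)$, and then the identity $d\sigma(t) = \frac{t^2}{1+t^2}\,d\nu(t)$ from \eqref{ravan} forces $\sigma$ to be concentrated on $[0,\infty)$ as well. Hence Theorem~\ref{main theorem-2} applies and already delivers that $\nuplus$ is regularly varying of index $-\alpha$ if and only if $\sigma$ is, together with the tail equivalence $\nuplus\left(x,\infty\right) \sim \sigma\left(x,\infty\right)$ as $x\to\infty$.

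The one genuine step is to upgrade this to a statement about $\nu$ by showing $\sigma\left(x,\infty\right) \sim \nu\left(x,\infty\right)$ as $x\to\infty$. Writing $\frac{t^2}{1+t^2} = 1 - \frac{1}{1+t^2}$ in \eqref{ravan}, one obtains
\[
\nu\left(x,\infty\right) - \sigma\left(x,\infty\right) = \int_x^\infty \frac{1}{1+t^2}\,d\nu(t).
\]
Since the integrand is bounded above by $\left(1+x^2\right)^{-1}$ on $\left(x,\infty\right)$, this difference is at most $\left(1+x^2\right)^{-1}\nu\left(x,\infty\right)$ (the tail $\nu\left(x,\infty\right)$ being finite for large $x$ by the free regular integrability condition $\int_0^\infty \min\left(1,t\right)d\nu(t)<\infty$). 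As trivially $\sigma\left(x,\infty\right) \le \nu\left(x,\infty\right)$, the ratio $\sigma\left(x,\infty\right)/\nu\left(x,\infty\right)$ is sandwiched between $1 - \left(1+x^2\right)^{-1}$ and $1$, which forces the claimed equivalence. Because regular variation is preserved under asymptotic equivalence, $\sigma$ is regularly varying of index $-\alpha$ if and only if $\nu$ is.

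Finally I would chain the two equivalences: $\muminus = \nuplus$ is regularly varying of index $-\alpha$ iff $\sigma$ is iff $\nu$ is, and $\muminus\left(x,\infty\right) = \nuplus\left(x,\infty\right) \sim \sigma\left(x,\infty\right) \sim \nu\left(x,\infty\right)$. I do not expect any real obstacle here; the corollary is essentially a translation of Theorem~\ref{main theorem-2} through \eqref{ravan}, and the only computation is the elementary sandwich estimate above, which works precisely because the Radon--Nikodym factor $t^2/(1+t^2)$ tends to $1$ at infinity.
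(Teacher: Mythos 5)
Your proposal is correct and follows essentially the same route as the paper: both reduce the corollary to Theorem~\ref{main theorem-2} and establish $\sigma\left(x,\infty\right)\sim\nu\left(x,\infty\right)$ by an elementary sandwich estimate derived from the relation $d\sigma(t)=\frac{t^2}{1+t^2}\,d\nu(t)$ in \eqref{ravan}. The only cosmetic difference is that you bound the correction term $\int_x^\infty\frac{1}{1+t^2}\,d\nu(t)$ against the tail of $\nu$, while the paper bounds the equivalent term $\int_x^\infty\frac{1}{t^2}\,d\sigma(t)$ against the tail of $\sigma$; both yield the same conclusion.
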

\begin{remark}
Note that in Corollary \ref{sita}, the measure $\nu$ may not be a finite measure. Since $\nu$ being a L\'evy measure of a free regular probability measure we have $\nu(1,\infty) < \infty$ and therefore there is no ambiguity in talking about its tail behaviour.
\end{remark}
\begin{proof}[Proof of Corollary \ref{sita}]
First we observe the following with the notations $\sigma$, $\nu$ and $a$ be as in~\eqref{ravan}. Suppose $a=0$. Then from ~\eqref{ravan}, taking integral from $x$ to infinity on both sides we get,
\begin{align*}
\nu(x,\infty) &= \sigma(x,\infty) + \int_{x}^{\infty}\frac{1}{t^2}d\sigma(t) \\
&\leqslant (1 + \frac{1}{x^2})\sigma(x,\infty). \text{  since  } x < t.
\end{align*}
Therefore,
\begin{align*}
\sigma(x,\infty) \leqslant \nu(x,\infty) \leqslant (1 + \frac{1}{x^2})\sigma(x,\infty).
\end{align*}
Taking limit as $x \rightarrow \infty$ we get
\begin{equation}\label{ram1.1}
\sigma\left(x,\infty\right) \sim \nu\left(x,\infty\right) \text{  as  } x\rightarrow \infty.
\end{equation}
Now Corollary \ref{sita} is immediate from Theorem~\ref{main theorem-2} and the equation~\eqref{ram1.1} as 
\begin{align*}
\muminus\left(x,\infty\right) = \nuplus\left(x,\infty\right) \overset{Theorem~\ref{main theorem-2}}\sim \sigma\left(x,\infty\right) \overset{\eqref{ram1.1}}\sim \nu\left(x,\infty\right).                     
\end{align*}
\end{proof}

\section{Some corollaries}  \label{sec:cors}
As an application of our main result we study the compound free Poisson distribution which turn out to be the free analogue of the classical compound Poisson distribution. Recall, that if $G$ is a proper distribution on $[0,\infty)$ and $\lambda>0$ then the (classical) compound Poission distribution is defined as 
$$F(x)= e^{-\lambda}\sum_{n=0}^\infty \frac{\lambda^n}{n!} G^{(n)}(x)$$
where $G^{(0)}$ is dirac mass at 0 and $G^{(n)}$ is the $n$-th classical convolution of $G$. It was shown in \cite[Theorem 3]{embrechts1979subexponentiality} 
that $F$ is subexponential if and only if $G$ is subexponential and this is also equivalent to $\overline F(x)\sim \lambda \overline G(x)$ as $x\to\infty$. We show that a partial analogue of this result is true in the free setting when one restricts to regularly varying measures.

The representation of a compound free Poisson distribution $\mu = \pi\left(1, \rho\right)$ as $\mu = m \boxtimes \rho$ makes it an interesting object to study further as they arise as limits of empirical distribution of random matrices.

As a corollary of Corollary \ref{sita}, we get the following:
\begin{corollary} \label{theoremhere}
Let $\rho$ be a positively supported probability measure. Then for the compound free Poisson distribution $\mu = \pi\left(1, \rho\right)$ which coincides with the free multiplication $m \boxtimes \rho$, the following are equivalent.
\begin{enumerate}
\item The tail of $\mu$ is regularly varying with index $-\alpha$.
\item The tail of $\rho$ is regularly varying with index $-\alpha$.
\end{enumerate}
If any of the above holds, then $\mu\left(y,\infty\right) \sim \rho\left(y,\infty\right)$ as $y \rightarrow \infty$.
\end{corollary}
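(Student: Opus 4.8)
The plan is to exhibit $\mu = \pi(1,\rho)$ as a free regular infinitely divisible measure whose free Lévy measure is precisely $\rho$, and then to read off the conclusion directly from Corollary~\ref{sita}. By the definition of the compound free Poisson distribution recalled in Section~\ref{sec:prelim}, the measure $\pi(1,\rho)$ has vanishing drift $\eta' = 0$, no Gaussian part $a = 0$, and Lévy measure $\nu = 1 \cdot \rho = \rho$; hence its free cumulant transform is exactly of the form~\eqref{bercumulant} with $\eta' = 0$ and $\nu = \rho$. In other words, $\mu$ coincides with $\muminus$ for this choice of triplet.

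Before invoking Corollary~\ref{sita} I would check that $\mu$ falls within its hypotheses. Since $\rho$ is a probability measure on $[0,\infty)$, the measure $\nu = \rho$ is supported on $[0,\infty)$ and satisfies the integrability requirement, because $\int_{\mathbb{R}^+}\min(1,t)\,d\rho(t) \leqslant \int_{\mathbb{R}^+} d\rho(t) = 1 < \infty$, so $\rho$ is a bona fide free Lévy measure. Together with $\eta' = 0 \geqslant 0$ and $\nu((-\infty,0]) = 0$, this confirms that $\mu = \muminus$ is a genuine free regular infinitely divisible measure, as required.

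With this identification in place the statement is immediate. Corollary~\ref{sita} gives that $\mu$ has regularly varying tail of index $-\alpha$ if and only if its Lévy measure $\nu$ does, and that, when either holds, $\mu(y,\infty) \sim \nu(y,\infty)$ as $y \to \infty$. Substituting $\nu = \rho$ yields precisely the equivalence of (1) and (2) together with the asymptotic $\mu(y,\infty) \sim \rho(y,\infty)$. The proof presents no genuine obstacle; the only point demanding attention is the parameter bookkeeping, namely confirming that for rate one the Lévy measure equals $\rho$ itself rather than a scalar multiple, so that no proportionality constant survives in the final tail equivalence. This normalization is exactly what distinguishes the present statement from the classical compound Poisson result of \cite{embrechts1979subexponentiality}, where the tail equivalence reads $\overline{F}(x) \sim \lambda\,\overline{G}(x)$ and the rate $\lambda$ appears explicitly.
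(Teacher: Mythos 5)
Your proposal is correct and follows essentially the same route as the paper: identify $\rho$ as the free L\'evy measure of $\mu = \pi(1,\rho) = m \boxtimes \rho$ (as in Example~\ref{cexamp}) and then apply Corollary~\ref{sita}. The extra verification that $\rho$ satisfies the integrability condition and that the rate-one normalization leaves no scalar factor is a welcome, if minor, addition to what the paper states in one line.
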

\begin{proof}[Proof of Corollary \ref{theoremhere}]
This result follows directly from Corollary \ref{sita} while noticing from the example \ref{cexamp} that $\rho$ is the L\'evy measure of the compound free Poisson distribution $\mu = m \boxtimes \rho$. 
\end{proof}

Now we describe two situations where the above results can be applied. The first one is for random matrices while the other one is for the free stable laws.
\begin{example}
As mentioned in the introduction, $m \boxtimes \rho$ often occurs as a limiting spectral distribution. For example consider for all $N \geqslant 1$, $W_N = \frac{1}{M_N}X_N^{*}X_N$ where $X_N$ is a complex Gaussian random matrix with i.i.d. entries and the sequence $\{M_N\}_{N \geqslant 1}$ is such that $\lim_{N \to \infty}N/M_N = \lambda \in (0,\infty)$.  Also take $Y_N$,  for all $N \geqslant 1$ to be random complex Hermitian matrices independent of the entries of $X_N$. Suppose there exists a non random probability measure $\rho$ on $\mathbb{R}$ such that empirical spectral distribution of $Y_N$ converges to $\rho$ weakly in probability. In this setup when $\lambda = 1$, \citet[Theorem $2.3$]{chakrabarty2018note} tells us that the expected empirical spectral distribution of $W_NY_N$ converges to $m \boxtimes \rho$ weakly as $N \to \infty$. Therefore if we take $\rho$ to be regularly varying with tail index $-\alpha$, $\alpha \geqslant 0$, we are able to conclude that the tail of the limiting spectral distribution of $W_NY_N$ is same as that of $\rho$ using Corollary \ref{theoremhere}. \qed
\end{example}
\begin{example}\label{stableexample}
Following \cite{ber:pata:biane} we define two probability measures $\mu$ and $\nu$ to be equivalent (denote as $\mu \sim \nu$) if $\mu(S) = \nu(aS+b)$ for every Borel set $S \subseteq \mathbb{R}$, for some $a \in \mathbb{R}^+$ and $b \in \mathbb{R}$. A measure $\mu$ (excluding point mass measures) is said to be $\boxplus$-stable if for every $\nu_1, \nu_2 \in \mathcal{M}$ such that $\nu_1 \sim \mu \sim \nu_2$, it follows that $\nu_1 \boxplus \nu_2 \sim \mu$. Associated with every $\boxplus$-stable measure $\mu$ there is a number $\alpha \in (0,2]$ such that the measure $\mu \boxplus \mu$ is a translate of the measure $D_{1/{2^{\alpha}}}\mu $ where $D_a\mu(S) = \mu(aS)$. The number $\alpha$ is called the stability index of $\mu$. The probability measure $\mu^{(2)}$ will be the image of $\mu$ under the map $t \to t^2$ on $\mathbb{R}$.

We give a proper example where Corollary \ref{theoremhere} follows directly. From the appendix of \cite{ber:pata:biane} we get that the Voiculescu transform of a $\boxplus$-stable probability measure with stability index $\alpha \in (0,1)$ is of the form $$\phi(z) = -e^{i\alpha \rho \pi}z^{-\alpha + 1}$$ where $\rho$ is called the asymmetry coefficient. Now using Theorem  \ref{thm: error equiv} of Appendix we can conclude that the $\boxplus$-stable probability measures in $\mathcal{M}_0$ with stability index $\alpha \in (0,1)$ are exactly regularly varying probability measures with tail index $-\alpha$.

Let $\mu_{\alpha}$ be a regularly varying symmetric free $\alpha$-stable law with $0 < \alpha <2$. Then $\mu_{\alpha}^{(2)} = \rho_{\frac{\alpha}{2}} \boxtimes m$, where $\rho_{\frac{\alpha}{2}}$ is a free positive $\frac{\alpha}{2}$ stable law.  

The above statement can be verified by the following arguments. First from \citet[Corollary $21$]{fidmix2012abreu} observe  that the positive $\frac{\alpha}{2}$-stable law $\mu_{\alpha}^{(2)}$ enjoys the relation $$\mu_{\alpha}^{(2)} = ({\rho}_{\beta} \boxtimes {\rho}_{\beta})\boxtimes m,$$ where ${\rho}_{\beta}$ is a free positive $2\alpha/(2+\alpha)$ stable law. Applying \citet[Proposition $13$]{arizsymmetric}, it follows that ${\rho}_{\beta} \boxtimes {\rho}_{\beta} = \rho_{\frac{\alpha}{2}}$. Hence $\mu_{\alpha}^{(2)} = \rho_{\frac{\alpha}{2}} \boxtimes m$. Observe $\mu_{\alpha}^{(2)}$ and $\rho_{\frac{\alpha}{2}}$ are in $\mathcal{M}_0$ implies that both have regularly varying tail of index $-\frac{\alpha}{2}$. The Corollary~\ref{theoremhere} can be seen as generalizing this behaviour to a much more general class of probability measures. \qed
\end{example}


It is a pertinent question that whether the conclusion involving Marchenko-Pastur law can be replaced by the standard Wigner's semicircle law, $w$. The measures of the form $w \boxtimes \rho$ for some $\rho \in \mathcal{M}_+$ has appeared as the limiting spectral distributions of random matrices (see \citet{anderson2008law, hazralongrange, chakrabarty2018spectra}), free type $W$ distributions (see \citet{fidmix2012abreu}) and in several other places. 

The first observation in this regard is that in general one cannot say that $w \boxtimes \rho$ is free infinitely divisible for some $\rho \in \mathcal{M}_+$. In fact if one considers the measure $w_+$ having density $$f_{w_+}(x) = \frac{1}{2\pi}\sqrt{4-{(x-2)}^2}\mathbf{1}_{[0,4]}(x),$$ then it was shown in \citet[Corollary $3.5$]{sakumacounterex} that $w \boxtimes w_+$ is not a free infinitely divisible measure. The obstacle comes from the fact that $w_+$ is not free regular. So a valid question in this regard is whether $w \boxtimes \rho$ is regularly varying if $\rho$ is free regular with regularly varying tail? We give a partial answer when the measure $\rho \boxtimes \rho$ is regularly varying of index $-\alpha$, $\alpha \geqslant 0$. Such a particular case can arise in free stable laws and goes back to the works of \citet[Proposition A$4.3$]{ber:pata:biane} which states if $\rho_{\alpha}$ and $\rho_{\beta}$ are free stable laws of index $\alpha$, $\beta$ $\in (0,1)$ respectively, then $\rho_{\alpha} \boxtimes \rho_{\beta}$ is free stable law of index  $\frac{\alpha \beta}{\alpha +\beta - \alpha \beta}$ and hence regularly varying of index $-\frac{\alpha \beta}{\alpha +\beta - \alpha \beta}$ (as discussed in the second paragraph of Example \ref{stableexample}). So combining these observations we have the following corollary where we use the definition of regularly varying measures supported on $\mathbb{R}$ instead of $\mathbb{R}^+$. Since we restrict ourselves to symmetric probability measures we don't go into the details of the definition of regular variation of such tail balanced measures.

\begin{corollary} \label{corwigprod}
Let $\rho \in \mathcal{M}_+$ and $w$ be the standard Wigner measure. Then the following are equivalent.
\begin{enumerate}
\item \label{example1of1} $\rho_0 = \rho \boxtimes \rho$ is free regular infinitely divisible, regularly varying probability measure with tail index $-\alpha$, $\alpha \geqslant 0$.
\item \label{example1of2}$\mu = w \boxtimes \rho$ is free infinitely divisible, regularly varying with tail index $-\frac{\alpha}{2}$.
\end{enumerate}
\end{corollary}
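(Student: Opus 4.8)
The plan is to reduce everything to the positive measure obtained by squaring, via the structural identity
\[
(w\boxtimes\rho)^{(2)} \;=\; m\boxtimes(\rho\boxtimes\rho),
\]
and then to feed the right-hand side into Corollary~\ref{theoremhere}. To see the identity, I would realise $\mu=w\boxtimes\rho$ as the law of $Y=a^{1/2}Sa^{1/2}$, where $a$ is a positive operator with law $\rho$ and $S$ is a standard semicircular element free from $a$. Since $w^{(2)}=m$ one has $S^2\sim m$, and the cyclic computation $\phi(Y^{2k})=\phi\big((aS)^{2k}\big)$ shows that $\mu^{(2)}$ has the same moment sequence as the law of $S'rS'$, where $r$ has law $\rho\boxtimes\rho$ and $S'$ is semicircular and free from $r$; that law is exactly $m\boxtimes(\rho\boxtimes\rho)=\pi(1,\rho\boxtimes\rho)$ (cf.\ Example~\ref{cexamp}). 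In practice I would verify the moment identity by matching free cumulants over non-crossing partitions of the alternating word $aSaS\cdots$, using that the only nonzero free cumulant of $S$ is $\kappa_2(S)=1$ and that mixed free cumulants of the free pair $(a,S)$ vanish; the stable instance of this identity is already recorded, through \cite{fidmix2012abreu}, in Example~\ref{stableexample} and serves as a calibration.

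Granting the identity, set $\rho_0=\rho\boxtimes\rho$, so that $\mu^{(2)}=\pi(1,\rho_0)$ is a compound free Poisson measure whose free L\'evy measure is precisely $\rho_0$. Corollary~\ref{theoremhere} (equivalently Corollary~\ref{sita}) then gives that $\mu^{(2)}$ has a regularly varying tail if and only if $\rho_0$ does, with $\mu^{(2)}(x,\infty)\sim\rho_0(x,\infty)$. It remains to transfer regular variation across the square map between the symmetric measure $\mu$ on $\mathbb{R}$ and the positive measure $\mu^{(2)}$: since $\mu$ is symmetric (because $S\sim -S$), one has $\mu^{(2)}(y,\infty)=2\,\mu(\sqrt y,\infty)$, so $\mu$ is regularly varying precisely when $\mu^{(2)}$ is, the two indices differing by the factor two recorded in the statement (and visible in Example~\ref{stableexample}, where $\mu_\alpha=w\boxtimes\rho_\beta$ while $\rho_\beta\boxtimes\rho_\beta=\rho_{\alpha/2}$). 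Chaining these equivalences matches the regular-variation halves of~(1) and~(2).

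For the infinite-divisibility halves I would invoke the symmetrization correspondence between symmetric $\boxplus$-infinitely divisible laws and free regular $\boxplus$-infinitely divisible laws: a symmetric $\mu$ is free infinitely divisible exactly when its push-forward $\mu^{(2)}$ is free regular infinitely divisible, the free L\'evy measures being related by the same square map. Combined with $\mu^{(2)}=\pi(1,\rho_0)$, this ties the free infinite divisibility of $\mu$ to the free regularity of $\rho_0$, so the ID clauses of~(1) and~(2) stand or fall together. Genuine care is needed here because squaring does not commute with $\boxplus$: the measure $w_+$ discussed above, for which $w\boxtimes w_+$ fails to be free infinitely divisible, is precisely the case in which $\rho_0=w_+\boxtimes w_+$ fails to be free regular, so the correspondence is not vacuous.

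The main obstacle I expect is twofold. First, proving the structural identity $(w\boxtimes\rho)^{(2)}=m\boxtimes(\rho\boxtimes\rho)$ for an \emph{arbitrary} $\rho\in\mathcal{M}_+$ rather than the stable calibration: this is a real free-probabilistic computation requiring control of the full non-crossing-partition expansion of $aSaS\cdots$ (or an argument through $S$-transforms), together with a truncation/approximation step to pass from compactly supported $\rho$ to measures with heavy tails. Second, making the infinite-divisibility correspondence precise in \emph{both} directions, since the naive symmetric square root of a free regular law need not be free infinitely divisible; this is the delicate point that the $w_+$ example warns about, and it is exactly where the clauses ``$\rho_0$ free regular'' and ``$\mu$ free infinitely divisible'' must be shown to be equivalent rather than one merely implying the other.
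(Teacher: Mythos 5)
Your treatment of the regular-variation halves is sound and is essentially the paper's own argument: the identity $(w\boxtimes\rho)^{(2)}=m\boxtimes(\rho\boxtimes\rho)$ (which the paper simply imports as Lemma~8 of \cite{arizsymmetric} rather than re-deriving through non-crossing partitions), followed by Corollary~\ref{theoremhere} applied to $m\boxtimes\rho_0$, and the transfer of regular variation through $x\mapsto\sqrt{x}$ for the symmetric measure $\mu$.

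The gap is in the infinite-divisibility halves. The correspondence you invoke --- ``a symmetric $\mu$ is free infinitely divisible exactly when $\mu^{(2)}$ is free regular'' --- cannot hold in the direction you need, and in any case it relates the wrong pair of measures. Indeed, for \emph{every} $\rho\in\mathcal{M}_+$ the measure $\mu^{(2)}=m\boxtimes\rho_0$ is the compound free Poisson $\pi(1,\rho_0)$ with positive jump distribution $\rho_0=\rho\boxtimes\rho$ (Example~\ref{cexamp}), hence is \emph{always} free regular; your biconditional would therefore force $w\boxtimes\rho$ to be free infinitely divisible for every $\rho\in\mathcal{M}_+$, contradicting the counterexample $w\boxtimes w_+$ of \cite{sakumacounterex} that you yourself cite. (Your parenthetical reading of that example also conflates two different measures: $w_+\boxtimes w_+$ indeed fails to be free regular, but $m\boxtimes(w_+\boxtimes w_+)$ does not fail.) In the direction $(2)\Rightarrow(1)$ the same correspondence would only yield that $m\boxtimes\rho_0$ is free regular, which is automatic and says nothing about $\rho_0$ itself. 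What is needed, and what the paper uses, is the sharper statement of Theorem~22 of \cite{fidmix2012abreu}: $\rho\boxtimes\rho$ is free regular infinitely divisible if and only if $w\boxtimes\rho$ is a symmetric free infinitely divisible measure. This relates $\rho_0$ directly to $\mu$, bypassing $\mu^{(2)}$ entirely for the divisibility clause, and settles both implications at once; your argument as written does not establish either one.
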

\begin{proof} Assume \eqref{example1of1}. Theorem $22$ of \cite{fidmix2012abreu} says that for $\rho \in \mathcal{M}_+$ and $w$ be the standard Wigner measure, then $\rho_0 = \rho \boxtimes \rho$ is a free regular infinitely divisible probability measure if and only if $\mu = w \boxtimes \rho$ is a symmetric free infinitely divisible probability measure. Now from Lemma $8$ of \cite{arizsymmetric} we get
\begin{equation*} 
\mu^{2} = w^{2} \boxtimes \rho \boxtimes \rho  = m \boxtimes \rho \boxtimes \rho  = m \boxtimes \rho_0.
\end{equation*}
Since $\rho_0$ is regularly varying with tail index $-\alpha$ we have from Corollary \ref{theoremhere} that $\mu^2$ is also regularly varying with tail index $-\alpha$. Thus by using the transform $x \mapsto \sqrt{x}$ we get that the symmetric measure $\mu$ is regularly varying with tail index $-\frac{\alpha}{2}$. Thus we have shown \eqref{example1of2}.

The arguments given above can be reversed to show that \eqref{example1of2} implies \eqref{example1of1}.  
\end{proof}
The following is also an immediate consequence of the above discussion and Corollary \ref{corwigprod}.
\begin{corollary}
Let $\alpha \in (0,1)$ and the  measure $\rho_{\frac{2\alpha}{\alpha +1}}$ is free stable of index $\frac{2\alpha}{\alpha +1}$. Then $w \boxtimes \rho_{\frac{2\alpha}{\alpha +1}}$ is regularly varying with tail index $-\frac{\alpha}{2}$.
\end{corollary}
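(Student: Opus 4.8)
The plan is to exhibit $\rho = \rho_{\frac{2\alpha}{\alpha+1}}$ as an instance to which Corollary~\ref{corwigprod} applies, with the tail index there played by $\alpha$. Writing $\beta = \frac{2\alpha}{\alpha+1}$, I first observe that $\beta \in (0,1)$ whenever $\alpha \in (0,1)$ (indeed $\beta \to 0$ as $\alpha \to 0$ and $\beta \to 1$ as $\alpha \to 1$), so that $\rho = \rho_\beta$ is a free positive stable law of index $\beta \in (0,1)$ and in particular lies in $\mathcal{M}_+$, which is the standing hypothesis of Corollary~\ref{corwigprod}.

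Next I would compute $\rho_0 = \rho \boxtimes \rho = \rho_\beta \boxtimes \rho_\beta$ using the free multiplicative stability result of \citet[Proposition A$4.3$]{ber:pata:biane} recalled in the discussion preceding Corollary~\ref{corwigprod}: the product $\rho_\beta \boxtimes \rho_\beta$ is again a free (positive) stable law, of index $\frac{\beta\cdot\beta}{\beta+\beta-\beta\cdot\beta} = \frac{\beta}{2-\beta}$. Substituting $\beta = \frac{2\alpha}{\alpha+1}$ gives $\frac{\beta}{2-\beta} = \alpha$, so $\rho_0$ is a free positive stable law of index $\alpha \in (0,1)$. The free multiplicative convolution of two measures in $\mathcal{M}_+$ again lies in $\mathcal{M}_+$, so $\rho_0$ is supported on $[0,\infty)$.

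The remaining point is to verify that $\rho_0$ satisfies hypothesis~\eqref{example1of1} of Corollary~\ref{corwigprod}. Being a free positive stable law of index $\alpha \in (0,1)$, the measure $\rho_0$ is free regular infinitely divisible; moreover, by the characterization recorded in Example~\ref{stableexample}, a $\boxplus$-stable law in $\mathcal{M}_0$ with stability index $\alpha \in (0,1)$ is precisely a regularly varying probability measure with tail index $-\alpha$, and $\rho_0 \in \mathcal{M}_0$ since its index is below $1$. Thus $\rho_0 = \rho \boxtimes \rho$ is free regular infinitely divisible and regularly varying with tail index $-\alpha$, which is exactly statement~\eqref{example1of1}. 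Invoking the implication \eqref{example1of1} $\Rightarrow$ \eqref{example1of2} of Corollary~\ref{corwigprod} then yields that $\mu = w \boxtimes \rho_{\frac{2\alpha}{\alpha+1}}$ is regularly varying with tail index $-\frac{\alpha}{2}$, as claimed.

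I expect the only delicate step to be the verification that $\rho_0$ is free \emph{regular} infinitely divisible, and not merely free infinitely divisible and regularly varying, since Corollary~\ref{corwigprod} genuinely requires free regularity; this is where one must appeal to the structural fact that positive free stable laws of index in $(0,1)$ have L\'evy measure supported on $(0,\infty)$ and nonnegative drift, hence are free regular. The index arithmetic, by contrast, is routine once the multiplicative stability formula is in hand.
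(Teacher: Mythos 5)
Your proposal is correct and follows exactly the route the paper intends: the paper states this corollary as an immediate consequence of Corollary~\ref{corwigprod} together with the multiplicative stability formula $\rho_\beta \boxtimes \rho_\beta$ being free stable of index $\frac{\beta}{2-\beta}$, which equals $\alpha$ when $\beta = \frac{2\alpha}{\alpha+1}$, precisely as you compute. Your additional care in checking that $\rho_0$ is free \emph{regular} (not merely free infinitely divisible) is a worthwhile detail the paper leaves implicit.
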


Now we relate our result for the free regular probability measures (Corollary \ref{sita}) and the famous classical result (stated in Theorem \ref{thm: embrechts}) via the notion of Bercovici-Pata bijection.
\begin{definition}[\cite{ber:pata:biane}]\label{ramravan}
The \textbf{Bercovici-Pata bijection} between the set of classical infinitely divisible probability measures $I\left(*\right)$ and the set of free infinitely divisible probability measures $I\left(\boxplus\right)$ is the mapping $\Lambda : I\left(*\right) \rightarrow I\left(\boxplus\right)$ that sends the measure $\mu$ in $I\left(*\right)$ with classical characteristic triplet $\left(\eta, a, \nu\right)$ (see equation \eqref{eq: LK id111}) to the measure $\Lambda\left(\mu\right)$ in $I\left(\boxplus\right)$ with free characteristic triplet $\left(\eta, a, \nu\right)$ (see equation \eqref{lkr}).
\end{definition}
\begin{corollary}\label{classifreeconnec}
Suppose $\alpha \geqslant 0$, $\eta^{'} > 0$ and $\nu \in \mathcal{M}_{+}$ satisfies $\int_{\mathbb{R}^+}min\left(1,t\right)d\nu\left(t\right)<\infty$. Then the classical infinitely divisible probability measure $\nustar$ has regularly varying tail of index $-\alpha$ if and only if the free regular infinitely divisible probability measure $\muminus$, the image of $\nustar$ under Bercovici-Pata bijection, has regularly varying tail of index $-\alpha$. In either case,
$$\nustar\left(x,\infty\right) \sim \muminus\left(x,\infty\right) \text{ as $x\to\infty$.}$$
\end{corollary}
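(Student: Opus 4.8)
The plan is to route both the classical subordinator $\nustar$ and the free regular measure $\muminus$ through their common L\'evy measure $\nu$, exploiting that the Bercovici--Pata bijection (Definition~\ref{ramravan}) fixes the characteristic triplet. First I would record that $\muminus$ really is $\Lambda(\nustar)$: the Laplace representation of Theorem~\ref{thm: embrechts} and the reduced free cumulant representation \eqref{cumulanttt} both carry the drift $\eta'$ and the same measure $\nu$, and converting each into the truncated L\'evy--Khintchine forms \eqref{eq: LK id111} and \eqref{lkr} shows that both correspond to the triplet $(\eta,0,\nu)$ with $\eta = \eta' + \int_0^1 t\,d\nu(t)$. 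Since $\Lambda$ preserves the triplet, $\nustar$ and $\muminus$ are indeed paired by it.

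The free half of the statement is then immediate: Corollary~\ref{sita} already asserts that $\muminus$ has regularly varying tail of index $-\alpha$ if and only if $\nu$ does, and that in that case $\muminus\left(x,\infty\right)\sim\nu\left(x,\infty\right)$ as $x\to\infty$. So it remains to establish the parallel classical statement, that $\nustar$ has regularly varying tail of index $-\alpha$ if and only if $\nu$ does, with $\nustar\left(x,\infty\right)\sim\nu\left(x,\infty\right)$. Here I would feed Theorem~\ref{thm: embrechts} with the standard fact that a regularly varying tail of index $-\alpha$, $\alpha\geqslant 0$, is subexponential (see \cite{hazra1}). If $\nustar$ is regularly varying of index $-\alpha$, then $\nustar$ is subexponential, so implication $(a)\Rightarrow(c)$ of Theorem~\ref{thm: embrechts} gives $\nustar\left(x,\infty\right)\sim\nu\left(x,\infty\right)$; being tail equivalent to a regularly varying function, $\nu\left(\cdot,\infty\right)$ is then regularly varying of index $-\alpha$ as well. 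Conversely, if $\nu$ is regularly varying of index $-\alpha$, then so is $\overline\nu$, hence $\overline\nu$ is subexponential, and $(b)\Rightarrow(c)$ of Theorem~\ref{thm: embrechts} again yields $\nustar\left(x,\infty\right)\sim\nu\left(x,\infty\right)$, whence $\nustar$ inherits regular variation of index $-\alpha$ from $\nu$.

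Finally I would glue the two halves together by transitivity through $\nu$. The equivalences ``$\nustar$ regularly varying $\Leftrightarrow$ $\nu$ regularly varying'' and ``$\muminus$ regularly varying $\Leftrightarrow$ $\nu$ regularly varying'' combine to ``$\nustar$ regularly varying $\Leftrightarrow$ $\muminus$ regularly varying'', and in that case the two asymptotics $\nustar\left(x,\infty\right)\sim\nu\left(x,\infty\right)$ and $\muminus\left(x,\infty\right)\sim\nu\left(x,\infty\right)$ chain to the claimed $\nustar\left(x,\infty\right)\sim\muminus\left(x,\infty\right)$.

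I expect the only delicate point to be the passage between Theorem~\ref{thm: embrechts}, which is stated in terms of subexponentiality, and the present regularly varying formulation: the argument hinges on the implication that regular variation forces subexponentiality (so that the classical theorem applies at all) and, in the reverse direction, on the elementary observation that tail equivalence to a regularly varying function preserves the index of regular variation. Both facts are classical, so beyond this the proof is essentially bookkeeping; the triplet matching in the first step needs only the routine conversion between the reduced and truncated L\'evy--Khintchine representations and poses no genuine difficulty.
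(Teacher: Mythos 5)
Your proposal is correct and follows essentially the same route as the paper's proof: both pass through the common L\'evy measure $\nu$, using Theorem~\ref{thm: embrechts} (together with the fact that regular variation implies subexponentiality) on the classical side and Corollary~\ref{sita} on the free side, then chain the two tail equivalences. You are somewhat more explicit than the paper about the triplet matching under the Bercovici--Pata bijection (which the paper handles by citing Sato's Remark~21.6) and about why Embrechts' subexponentiality-based theorem applies in the regularly varying setting, but these are elaborations of the same argument rather than a different approach.
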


\begin{proof}[Proof of Corollary \ref{classifreeconnec}]
Suppose the classical infinitely divisible probability measure $\nustar$ has regularly varying tail, then by  Theorem~\ref{thm: embrechts} we have the measure $\nu$ in the Laplace transform has the same regularly varying tail. Now both measures in the Laplace transform and the Fourier transform is same $\nu$ by \citep[Remark~21.6]{sato1999ki} since the measure $\nu$ satisfies the conditions $a=0$ in ~\eqref{eq: LK id111}, $\int_{-\infty}^{0}d\nu(t) = 0$, $\int_{0}^{1}td\nu(t) < \infty$ and $\eta > 0$. Then  the relation $\eqref{ram1.1}$ assures that $\sigma$ has also the same regular variation and finally applying corollary~\ref{sita} we can conclude that $\muminus$ has the same regular variation like $\nustar$. The arguments can also be reversed.
\end{proof}

\section{Appendix}\label{sec:appendix}
In the above proofs we have used some important results from \cite{hazra1}. We recall these results here to help the reader. The following two theorems are written in a more compact form which are in particular Theorems $2.1-2.4$ in \cite{hazra1}.
\begin{theorem}\label{thm: error equiv}
Let $p$ be a nonnegative integer and $\mu$ be a probability measure in the class $\mathcal M_p$ and $\alpha\in[p,p+1)$. The following statements are equivalent:
\let\myenumi\theenumi
\renewcommand{\theenumi}{\roman{enumi}}
\begin{enumerate}
\item $y\mapsto \mu\left(y,\infty\right)$ is regularly varying of index $-\alpha$. \label{tail}
\item $y\mapsto \Im r_G\left(iy\right)$ is regularly varying of index $-\left(\alpha-p\right)$. \label{Cauchy remainder}
\item $y\mapsto \Im r_\phi\left(iy\right)$ is regularly varying of index $-\left(\alpha-p\right)$, $\Re r_\phi\left(iy\right)\gg y^{-1}$ as $y\to\infty$ and $r_\phi\left(z\right)\gg z^{-1}$ as $z\to\infty$ n.t. \label{Voiculescu remainder}
\end{enumerate}
If any of the above statements holds, we also have, as $z\to\infty$ n.t., $r_G\left(z\right)\sim r_\phi\left(z\right) \gg z^{-1}$; as $y\to\infty$,
$$\Im r_\phi\left(iy\right) \sim \Im r_G\left(iy\right) \sim -\frac{\frac{\pi\left(p+1-\alpha\right)}2}{\cos\frac{\pi\left(\alpha-p\right)}2} y^p \mu\left(y,\infty\right) \gg \frac1y \text{ and } \Re r_\phi\left(iy\right) \sim \Re r_G\left(iy\right) \gg \frac1y.$$
If $\alpha>p$ and any of the statements~\eqref{tail}-\eqref{Voiculescu remainder} holds, we further have, as $y\to\infty$,
$$\Re r_\phi\left(iy\right) \sim \Re r_G\left(iy\right) \sim -\frac{\frac{\pi\left(p+2-\alpha\right)}2}{\sin\frac{\pi\left(\alpha-p\right)}2} y^p \mu\left(y,\infty\right).$$
If $\alpha=p=0$ and any of the statements~\eqref{tail}-\eqref{Voiculescu remainder} holds, we further have, as $y\to\infty$,
$$\Re r_\phi\left(iy\right) \sim \Re r_G\left(iy\right) \sim - \mu\left(y,\infty\right).$$
\end{theorem}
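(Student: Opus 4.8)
The plan is to push everything onto the Cauchy-transform remainder $r_{G_\mu}$, which admits a transparent integral representation, and only at the end transfer the conclusions to $r_{\phi_\mu}$ through the functional relation $\phi_\mu=F_\mu^{-1}-z$ with $F_\mu=1/G_\mu$. The enabling observation is the finite geometric-sum identity $\tfrac{1}{z-t}-\sum_{j=1}^{p+1}t^{j-1}z^{-j}=\tfrac{t^{p+1}}{z^{p+1}(z-t)}$, which inserted into~\eqref{eq: rG defn} collapses $r_{G_\mu}$ to
\begin{equation*}
r_{G_\mu}(z)=\int_{\mathbb R}\frac{t^{p+1}}{z-t}\,d\mu(t),\qquad z\in\mathbb C^+.
\end{equation*}
Evaluating on the imaginary axis and separating real and imaginary parts gives $\Im r_{G_\mu}(iy)=-y\int t^{p+1}(t^2+y^2)^{-1}d\mu(t)$ and $\Re r_{G_\mu}(iy)=-\int t^{p+2}(t^2+y^2)^{-1}d\mu(t)$; both are (signed) Stieltjes/Poisson-type transforms of $\mu$, so the theorem becomes a two-way bridge between the tail $\mu(y,\infty)$ and the regular variation of these transforms.

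For the Abelian half, i.e.\ (i)$\Rightarrow$(ii) with the displayed asymptotics, I would assume $\overline F(t):=\mu(t,\infty)$ is regularly varying of index $-\alpha$ and substitute $t=ys$. Karamata's theorem for truncated moments, combined with a Potter-bound / dominated-convergence argument, localizes the mass to the scale $t\asymp y$ (the fixed-$t$ bulk contributing only $\bigo(y^{-2})$, which is negligible against $y^{p-1}\overline F(y)$ because $\alpha<p+1$), so that the integral is asymptotic to $y^{p-1}\overline F(y)$ times a convergent integral of the form $\int_0^\infty s^{p-\alpha}[(p-1)s^2+(p+1)](s^2+1)^{-2}\,ds$. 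Evaluating this Beta integral by the reflection formula $\Gamma(a)\Gamma(1-a)=\pi/\sin\pi a$ yields the trigonometric constant recorded in the statement, and the analogous computation with $t^{p+2}$ gives the real-part asymptotic (the degenerate case $\alpha=p=0$, where the sine in the denominator cancels, being handled as a limit).

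The converse (ii)$\Rightarrow$(i) is the Tauberian direction and is where I expect the real work. Here I would regard $t^{p+1}\,d\mu(t)$ as a fixed positive measure and read $-\Im r_{G_\mu}(iy)/y$ as its Stieltjes transform against the kernel $(t^2+y^2)^{-1}$; monotonicity of $\overline F$ supplies the Tauberian side condition, and a Stieltjes-transform Tauberian theorem (in the style of Bingham--Goldie--Teugels) lets me invert regular variation of the transform into regular variation of $\int_y^\infty t^{p+1}\,d\mu$, and thence of $\overline F$ itself. The delicate points are that $\mu$ has an infinite $(p+1)$-st moment, so the underlying measure is only locally finite and the index must be pinned correctly; it is the positivity of both the kernel and $t^{p+1}d\mu$ that keeps the Tauberian step legitimate.

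Finally I would close the loop by transferring from $r_{G_\mu}$ to $r_{\phi_\mu}$. Writing $G_\mu(z)=\sum_{j=1}^{p+1}m_{j-1}(\mu)z^{-j}+z^{-(p+1)}r_{G_\mu}(z)$, inverting $F_\mu=1/G_\mu$ by Lagrange inversion, and subtracting the free-cumulant polynomial, a bootstrap argument should give $r_{\phi_\mu}(z)\sim r_{G_\mu}(z)$ as $z\to\infty$ n.t.; the side conditions $\Re r_{\phi_\mu}(iy)\gg y^{-1}$ and $r_{\phi_\mu}(z)\gg z^{-1}$ in (iii) are precisely what guarantees that the inversion does not bury the leading remainder inside the subtracted polynomial, so that regular variation and the stated asymptotics pass between $\Im r_{G_\mu}$ and $\Im r_{\phi_\mu}$ intact. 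The two places I expect trouble are this lossless $G\leftrightarrow\phi$ transfer and, above all, the Tauberian inversion of the previous step.
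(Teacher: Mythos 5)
First, a point of reference: the paper does not actually prove this theorem. It is quoted in the Appendix as a compactified form of Theorems 2.1--2.4 of \cite{hazra1}, precisely so that the article can invoke it as a black box; there is no in-paper proof to compare yours against. Judged on its own, your architecture is the standard one and matches how such results are established in \cite{hazra1}: the collapse $r_{G_\mu}(z)=\int t^{p+1}(z-t)^{-1}\,d\mu(t)$ is correct, the Abelian direction via Potter bounds and dominated convergence is sound, and the Tauberian direction via a Stieltjes-kernel Tauberian theorem (after the substitution $t^2=u$, with monotonicity of the positive measure $t^{p+1}d\mu(t)$ as the side condition, followed by a Karamata inversion back to $\mu(y,\infty)$) is the right route.

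There are, however, two substantive problems. First, you assert without computing that your Beta integral ``yields the trigonometric constant recorded in the statement.'' It does not. Writing $u=\alpha-p$ and using $\Gamma(\tfrac{3\pm u}{2})=\tfrac{1\pm u}{2}\Gamma(\tfrac{1\pm u}{2})$ together with the reflection formula, one finds
\[
\int_0^\infty \frac{s^{p-\alpha}\bigl[(p-1)s^2+(p+1)\bigr]}{(1+s^2)^2}\,ds=\frac{\pi\alpha/2}{\cos\frac{\pi(\alpha-p)}{2}},
\]
i.e.\ the numerator is $\pi\alpha/2$, not $\pi(p+1-\alpha)/2$. A direct check with $p=0$ and the Pareto law $\mu(t,\infty)=t^{-1/3}$ on $[1,\infty)$ gives $-\Im r_{G_\mu}(iy)\sim\frac{\pi}{3\sqrt3}\,y^{-1/3}$, exactly half of what the displayed constant predicts. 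The constants as printed are the ones appropriate to normalising by the truncated moment $y^{-1}\int_0^y t^{p+1}\,d\mu(t)$ (respectively $y^{-2}\int_0^y t^{p+2}\,d\mu(t)$), which differs from $y^p\mu(y,\infty)$ by the Karamata factor $\alpha/(p+1-\alpha)$; note also that $\frac{\pi\alpha/2}{\sin(\pi(\alpha-p)/2)}\to 1$ as $\alpha\downarrow p=0$, consistent with the stated degenerate case $\Re r_G(iy)\sim-\mu(y,\infty)$, whereas $\frac{\pi(p+2-\alpha)/2}{\sin(\pi(\alpha-p)/2)}$ diverges there. So actually evaluating the integral is not optional: it exposes a normalisation discrepancy with the printed statement that your write-up papers over by claiming agreement. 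Second, the entire equivalence with item (iii) --- the lossless passage between $r_{G}$ and $r_{\phi}$ under non-tangential limits, why the side conditions $\Re r_\phi(iy)\gg y^{-1}$ and $r_\phi(z)\gg z^{-1}$ are exactly what prevents the remainder from being absorbed into the subtracted cumulant polynomial, and why these conditions are automatically produced in the forward direction --- is dispatched in two sentences as ``Lagrange inversion'' plus ``a bootstrap.'' That transfer (uniform control of $F_\mu^{-1}$ on cones $\Gamma_\eta$, and the asymptotic $r_{G_\mu}\sim r_{\phi_\mu}$) is where the bulk of the work in \cite{hazra1} lies, and as written your proposal does not contain it.
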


\begin{theorem}\label{thm: error equiv new}
Let $p$ be a nonnegative integer and $\mu$ be a probability measure in the class $\mathcal M_p$. Let $\beta\in\left(0,1/2\right)$ and $\alpha=p+1$. The following statements are equivalent:
\let\myenumi\theenumi
\renewcommand{\theenumi}{\roman{enumi}}
\begin{enumerate}
\item $y\mapsto \mu\left(y,\infty\right)$ is regularly varying of index $-\left(p+1\right)$. \label{tail new}
\item $y\mapsto \Re r_G\left(iy\right)$ is regularly varying of index $-1$. \label{Cauchy remainder new}
\item $y\mapsto \Re r_\phi\left(iy\right)$ is regularly varying of index $-1$, $y^{-1} \ll \Im r_\phi\left(iy\right)\ll y^{-\left(1-\beta/2\right)}$ as $y\to\infty$ and $z^{-1} \ll r_\phi\left(z\right) \ll z^{-\beta}$ as $z\to\infty$ n.t. \label{Voiculescu remainder new}
\end{enumerate}
If any of the above statements holds, we also have, as $z\to\infty$ n.t., $z^{-1}\ll r_G\left(z\right)\sim r_\phi\left(z\right)\ll z^{-\beta}$; as $y\to\infty$,
$$y^{-\left(1+\beta/2\right)} \ll \Re r_\phi\left(iy\right) \sim \Re r_G\left(iy\right) \sim -\frac\pi2 y^p \mu\left(y,\infty\right) \ll y^{-\left(1-\beta/2\right)}$$ and
$$y^{-1} \ll \Im r_\phi\left(iy\right) \sim \Im r_G\left(iy\right) \ll y^{-\left(1-\beta/2\right)}.$$
\end{theorem}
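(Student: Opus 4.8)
The plan is to read Theorem~\ref{thm: error equiv new} as an Abelian--Tauberian statement sitting at the boundary index $\alpha=p+1$, and to run the same circle of implications that underlies the non-critical companion Theorem~\ref{thm: error equiv}, with the crucial twist that at this boundary it is the \emph{real} part of the Cauchy remainder that carries the tail. The starting point is the exact representation obtained by subtracting the first $p+1$ moments: one verifies that $r_{G}(z)=\int_{0}^{\infty}t^{p+1}/(z-t)\,d\mu(t)$, so that on the imaginary axis
\begin{equation*}
\Re r_{G}(iy)=-\int_{0}^{\infty}\frac{t^{p+2}}{t^{2}+y^{2}}\,d\mu(t),\qquad
\Im r_{G}(iy)=-y\int_{0}^{\infty}\frac{t^{p+1}}{t^{2}+y^{2}}\,d\mu(t).
\end{equation*}
At $\alpha=p+1$ the weight $t^{p+1}$ is exactly on the borderline of integrability against $d\mu$, which is why $\Im r_{G}(iy)$ is only trapped between two powers of $y$ rather than regularly varying; the quantity with clean regular variation is $\Re r_{G}$, and this is the reason the hypotheses in (ii)--(iii) are phrased in terms of real parts.

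First I would settle (i)$\Leftrightarrow$(ii). For the Abelian direction, assuming $\mu(y,\infty)$ is regularly varying of index $-(p+1)$, I would substitute $t=ys$ in the integral for $\Re r_{G}(iy)$ (after an integration by parts transferring the mass onto the tail $\overline{F}(t)=\mu(t,\infty)$) and pass to the limit using Potter's bounds and dominated convergence; this produces $\Re r_{G}(iy)\sim -c_{p}\,y^{p}\mu(y,\infty)$ with $c_{p}$ an explicit Beta-function value, which is regularly varying of index $-1$ because $y^{p}\mu(y,\infty)$ has index $-1$. For the Tauberian converse I would write $x=y^{2}$ and push $t^{p+2}\,d\mu(t)$ forward under $t\mapsto t^{2}$ to a measure $\rho$, so that $-\Re r_{G}(iy)=\int_{0}^{\infty}(u+x)^{-1}\,d\rho(u)$ is a genuine Stieltjes transform in $x$; its regular variation then forces, by the Karamata--Stieltjes Tauberian theorem, regular variation of $\rho$ and hence of the tail of $\mu$.

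Next, for (ii)$\Leftrightarrow$(iii) I would pass between the Cauchy and Voiculescu remainders through the defining relations $\phi_{\mu}(z)=F_{\mu}^{-1}(z)-z$ and $F_{\mu}=1/G_{\mu}$. Inverting the Laurent expansion of $F_{\mu}$ (itself read off from that of $G_{\mu}$) and comparing remainders gives the key bridge $r_{\phi}(z)\sim r_{G}(z)$ as $z\to\infty$ n.t., which is exactly the remainder-equivalence already recorded in Theorem~\ref{thm: error equiv} and in \cite{taylorseries}. Once this is in force, the regular variation of $\Re r_{\phi}$ transfers to $\Re r_{G}$ and back, while the auxiliary sandwich bounds in (iii), namely $z^{-1}\ll r_{\phi}(z)\ll z^{-\beta}$ and $y^{-1}\ll\Im r_{\phi}(iy)\ll y^{-(1-\beta/2)}$, are read off from the corresponding controls on $r_{G}$: the lower bound $\gg z^{-1}$ reflects the divergence of the $(p+1)$-st moment, while the upper bound with exponent $\beta\in(0,1/2)$ encodes that the slowly varying factor in $\mu(y,\infty)$ is subpolynomial. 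The displayed asymptotic $\Re r_{\phi}(iy)\sim\Re r_{G}(iy)\sim -\tfrac{\pi}{2}y^{p}\mu(y,\infty)$ then follows by combining $r_{\phi}\sim r_{G}$ with the Abelian computation of the constant.

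I expect the real difficulty to be concentrated in two places. The first is the Tauberian step: turning regular variation of the Stieltjes transform $-\Re r_{G}(iy)$ back into regular variation of the tail needs the full strength of the Karamata--Stieltjes Tauberian theorem together with the monotonicity built into $\rho$, and one must rule out surviving oscillation. The second, and more delicate, is the inversion relating $r_{\phi}$ to $r_{G}$ exactly at $\alpha=p+1$: here the imaginary parts degenerate to a mere power-sandwich, so regular variation cannot be transported naively through $F_{\mu}^{-1}$, and the $\beta$-bounds must be carried through every step of the inversion to keep the inversion error strictly subordinate to the main term. This boundary bookkeeping, which is invisible in the non-critical range $\alpha\in[p,p+1)$ of Theorem~\ref{thm: error equiv}, is what makes the critical statement genuinely separate.
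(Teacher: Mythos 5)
First, a point of orientation: this paper does not prove Theorem~\ref{thm: error equiv new} at all. It appears only in the Appendix as a ``compact form'' of Theorems 2.1--2.4 of \cite{hazra1} and is used as an imported tool in the proof of Theorem~\ref{main theorem-2}. So there is no in-paper proof to compare your argument against; what follows measures your outline against what such a proof must actually contain.

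Your general architecture is the right one (the exact identity $r_G(z)=\int_0^\infty t^{p+1}(z-t)^{-1}\,d\mu(t)$, an Abelian computation for $\Re r_G(iy)$, a Stieltjes--Tauberian converse, and a transfer to $r_\phi$ through the inversion of $F_\mu=1/G_\mu$). But there is a genuine gap at the step you yourself call ``more delicate'': the bridge $r_\phi(z)\sim r_G(z)$ as $z\to\infty$ n.t. You justify it by appealing to Theorem~\ref{thm: error equiv} and to \cite{taylorseries}, yet the asymptotic $r_G\sim r_\phi$ in Theorem~\ref{thm: error equiv} is itself a \emph{conclusion} valid only under the hypotheses of that theorem, which require $\alpha\in[p,p+1)$; it is not available at $\alpha=p+1$ and is precisely one of the assertions to be established here. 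The relation $r_\phi\sim r_G$ fails for general $\mu\in\mathcal M_p$: to beat the error incurred in inverting $F_\mu$ one needs exactly the two-sided control $z^{-1}\ll r_G(z)\ll z^{-\beta}$ appearing in (iii), and this must first be derived from the tail hypothesis and then propagated through the composition with $F_\mu^{-1}$. Naming this ``boundary bookkeeping'' does not discharge it; it is the central technical content of the theorem and is absent from the proposal. A related structural point: the sandwich bounds in (iii) cannot be ``read off from $r_G$'' starting from (ii) alone --- you must first run (ii)$\Rightarrow$(i) to recover the tail and only then estimate $\Im r_G$.

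Three smaller corrections. (1) Your heuristic that $\Im r_G(iy)$ is merely ``trapped between two powers rather than regularly varying'' is not quite right: under (i) one finds $\Im r_G(iy)\sim -y^{-1}\int_0^y t^{p+1}\,d\mu(t)$, which \emph{is} regularly varying of index $-1$; the real issue is that it sees the integrated slowly varying function $\int_1^y L(t)t^{-1}\,dt$ rather than $L$ itself, so it neither determines nor is determined by the tail, and moreover $\Im r_G(iy)\gg\Re r_G(iy)$ here --- the opposite of the non-critical regime. (2) Carry out your Beta-function computation on the test case $p=1$, $\mu(t,\infty)=t^{-2}$ on $[1,\infty)$: one gets $-\Re r_G(iy)=2\int_1^\infty(t^2+y^2)^{-1}\,dt\sim\pi/y$, while $\tfrac{\pi}{2}y^p\mu(y,\infty)=\tfrac{\pi}{2}y^{-1}$; your $c_p$ comes out as $(p+1)\pi/2$, so you must reconcile your constant with the $\pi/2$ displayed in the statement rather than assert that the computation ``produces'' it. (3) In the Tauberian direction, regular variation of $U(y)=\int_0^y t^{p+2}\,d\mu(t)$ does not immediately give regular variation of $\mu(y,\infty)$; one further application of Karamata's theorem to $\int_y^\infty t^{-(p+2)}\,dU(t)$ is needed, and ``and hence of the tail of $\mu$'' elides it.
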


\bibliographystyle{abbrvnat}
\bibliography{infdivbib}
\end{document}